\newenvironment{customthm}[1]
  {\innercustomthm}
  {\endinnercustomthm}
  \newenvironment{proofclaim}{\begin{proof}[Proof of claim]}{\end{proof}}
\newenvironment{custompr}[1]
  {\innercustompr}
  {\endinnercustompr}
    \newtheorem*{notation*}{Notation}
\newcommand{\db}{\overline{B}}
\newcommand{\du}{\overline{U}}
\newcommand{\dt}{\overline{T}}
\newtheorem{theorem}{Theorem}[section]
\newtheorem{lemma}[theorem]{Lemma}
\newtheorem{fact}[theorem]{Fact}
\newtheorem*{theorem*}{Theorem}
\newtheorem*{maintheorem*}{Main Theorem}
\newtheorem*{lemma*}{Lemma}
\newtheorem{corollary}[theorem]{Corollary}
\newtheorem{propo}[theorem]{Proposition}
\theoremstyle{definition}
\newtheorem{conjecture}[theorem]{Conjecture}
\newtheorem{definition}[theorem]{Definition}
\newtheorem{remark}[theorem]{Remark}
\newtheorem*{remark*}{Remark}
\newtheorem{question}[theorem]{Question}
\newtheorem*{claim*}{Claim}
\title[Small groups of finite Morley rank with a supertight automorphism]{Small groups of finite Morley rank with a supertight automorphism}
\author[U. Karhum\"{a}ki]{Ulla Karhum\"{a}ki}
\address{University of Helsinki}
\email{ukarhumaki@gmail.com}
\author[P. U\v{g}urlu Kowalski]{P\i nar U\u{g}urlu Kowalski}
\address{Istanbul Bilgi University}
\email{pinar.ugurlu@bilgi.edu.tr}
\thanks{The first author is funded by the Finnish Science Academy grant no: 338334. Part of this work was done when she was funded by the Finnish Science Academy grant no: 322795. The second author has completed part of this work during her sabbatical leave in 2022–2023 Academic Year which she spent at Wroc{\l}aw University. The presentation of the paper was finished during the authors' stay in the Mathematisches Forschungsinstitut Oberwolfach.}
\begin{document}
\maketitle

\begin{abstract} Let $G$ be an infinite simple group of finite Morley rank and of Pr\"{u}fer $2$-rank $1$ which admits a supertight automorphism $\alpha$ such that the fixed-point subgroup $C_G(\alpha^n)$ is pseudofinite for all integers $n > 0$. The main result of this paper is the identification of $G$ with ${\rm PGL}_2(K)$ for some algebraically closed field $K$ of characteristic $\neq 2$. \end{abstract}

\tableofcontents

\section{Introduction}\label{section:Intro} This paper is a step towards confirming the Cherlin-Zilber conjecture under some extra assumptions, which will be described later. Our framework is that of groups equipped with a rudimentary notion of dimension which is called the \emph{Morley rank}. Excellent general references for the topic are \cite{Borovik-Nesin, ABC}.

In the case of algebraic varieties one has the geometric notion of
\emph{Zariski dimension}. Morley rank generalises Zariski dimension and therefore the class of groups of finite Morley rank generalises the class of algebraic groups over algebraically closed fields. While the former class is strictly broader than the latter, these two classes are closely connected. This connection was highlighted by the famous Cherlin-Zilber Algebraicity conjecture proposed in the late 70's independently by Cherlin \cite{Cherlin1979} and Zilber \cite{Zilber1977}:

\begin{conjecture}[The Cherlin-Zilber conjecture] Infinite simple
groups of finite Morley rank are isomorphic to algebraic groups over algebraically closed fields. \end{conjecture}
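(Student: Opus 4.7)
The plan is to combine three ingredients: the structure imposed on a Sylow $2$-subgroup by the Pr\"ufer $2$-rank~$1$ hypothesis, the classification of infinite pseudofinite simple groups applied to each $C_G(\alpha^n)$, and the rigidity supplied by supertightness, in order to assemble an algebraic copy of $\mathrm{PGL}_2(K)$ inside $G$. First I would analyse a Sylow $2$-subgroup $S\le G$: Pr\"ufer $2$-rank~$1$ forces $S^\circ\cong\mathbb{Z}_{2^\infty}$, so the Altinel--Borovik--Cherlin trichotomy places $G$ in odd, mixed, even, or degenerate type. The degenerate case is excluded because supertightness preserves divisible $2$-torsion and so produces a nontrivial $2$-torus in $C_G(\alpha)$; mixed and even type are incompatible with the pseudofiniteness of each $C_G(\alpha^n)$ combined with standard Sylow theory, since unipotent $2$-elements do not sit in a pseudofinite Sylow $2$-subgroup of an odd-characteristic-looking group. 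Hence $G$ is of \emph{odd type} of Pr\"ufer $2$-rank~$1$ and carries a definable decent torus $T$ with $S^\circ\le T$.

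Next I examine each $C_G(\alpha^n)$. It is definable, pseudofinite, inherits Pr\"ufer $2$-rank~$1$, and by supertightness has Morley rank independent of $n$. Its generalised Fitting subgroup therefore contains an infinite pseudofinite simple factor $L_n$; by Wilson's classification of pseudofinite simple groups, sharpened by Ryten for groups of bounded rank, $L_n$ is elementarily equivalent to a Chevalley group over a pseudofinite field $F_n$. The Pr\"ufer $2$-rank constraint, together with a Thompson-style analysis of involutions, narrows $L_n$ to $\mathrm{PSL}_2(F_n)$ or $\mathrm{PGL}_2(F_n)$; the presence inside $N_G(T)\cap C_G(\alpha^n)$ of an involution inverting $T$ but lying outside $T$ picks out the $\mathrm{PGL}_2$ case.

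The gluing step then proceeds as follows. For $n\mid m$ one has $C_G(\alpha^n)\le C_G(\alpha^m)$, so the fields $F_n$ form a directed system whose union $K=\bigcup_n F_n$ is interpretable in the ambient finite Morley rank structure. The restriction $\alpha|_K$ is a Frobenius-like automorphism whose fixed subfields are exactly the $F_n$; a field with an automorphism whose fixed subfields exhaust it in this way must coincide with the algebraic closure of each $F_n$, so $K$ is algebraically closed. The characteristic is forced to be different from $2$ by the odd-type structure already established. Finally, the $\alpha$-invariant definable copy of $\mathrm{PGL}_2(K)$ generated inside $G$ by the family $\{L_n\}$ has the same Morley rank as $G$, and simplicity of $G$ then forces equality, giving $G\cong\mathrm{PGL}_2(K)$.

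I expect the principal obstacle to lie in the gluing phase. Two subtleties must be addressed simultaneously: that the pseudofinite fields $F_n$ embed coherently into a single field interpretable \emph{inside} $G$, rather than only into some ultralimit external to the structure; and that the Chevalley data in the various $L_n$ actually piece together under the inclusions $L_n\le L_m$ into Chevalley generators for a single $\mathrm{PGL}_2(K)$ compatible with the $\alpha$-action. This is effectively a definable Lang--Steinberg statement for $\alpha$, and it is precisely this compatibility that the supertightness hypothesis is designed to deliver; distinguishing $\mathrm{PSL}_2$ from $\mathrm{PGL}_2$ uniformly across all $n$, rather than case by case, is the most delicate part.
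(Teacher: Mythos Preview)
The statement you were asked to prove is the Cherlin--Zilber \emph{conjecture} in full generality: every infinite simple group of finite Morley rank is algebraic over an algebraically closed field. The paper does not prove this; it is an open problem. Your entire argument begins by assuming Pr\"ufer $2$-rank~$1$ and the existence of a supertight automorphism $\alpha$ with pseudofinite fixed-point subgroups $C_G(\alpha^n)$. Those are the hypotheses of Theorem~\ref{th:main}, not of the conjecture. So as a proof of the stated conjecture your proposal is a non-starter: you have silently replaced the target by a much more restricted special case.

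If one reads your proposal instead as an attempt at Theorem~\ref{th:main}, the broad shape is recognisable but the endgame diverges sharply from the paper and carries a real gap. The paper does \emph{not} build the algebraically closed field $K$ as a directed union $\bigcup_n F_n$ interpretable in $G$; there is no reason in general that such a union is definable or interpretable in the finite Morley rank structure, and your appeal to a ``definable Lang--Steinberg'' phenomenon is exactly the sort of thing one cannot take for granted here. Instead the paper works entirely with definable closures: it shows $S_n\cong\mathrm{PSL}_2(F_n)$ (not $\mathrm{PGL}_2$) via Sylow $2$-structure, locates a copy of $\mathrm{PGL}_2(F_1)$ inside $C_G(\alpha)$ by a Galois argument on the extension $F_1\subset F_2$, and then studies the definable closures $\overline{U},\overline{\mathbb{T}},\overline{\mathbb{B}}$ inside $G$. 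The identification of $G$ comes from proving that $G$ is a split Zassenhaus group acting on $G/\overline{\mathbb{B}}$ and invoking the Delahan--Nesin theorem; the field $K$ appears only at that final step, produced by the identification theorem rather than assembled by hand. Your gluing strategy would need a genuinely new argument to make the union interpretable and to verify algebraic closedness, and that is not supplied.
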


The topic of groups of finite Morley rank lies in the border between
model theory and group theory and, from the group-theoretic point of view, the topic belongs somewhere in between algebraic group theory and finite group theory.

The classification of simple algebraic groups as Chevalley groups can be done by studying the maximal algebraic tori and their actions on unipotent subgroups; one then reveals the associated connected Dynkin diagram which determines the Lie type of the group in concern. The Classification of Finite Simple Groups (CFSG) states that, putting aside alternating groups and a handful of sporadic groups, a simple non-abelian finite group is a (twisted) group of Lie type. Again, one may identify the (twisted) group in concern by identifying the associated building.

Due to the (model-theoretically) tame nature of groups of finite Morley rank, from the wide tool-box developed in geometric stability theory, only few are useful in our context. Also, the first-order setting forbids the use of methods from algebraic geometry. As a result, in terms of techniques, the topic of infinite simple groups of finite Morley rank was pushed towards finite group theory. Indeed, so-far, the most fruitful approach towards the Cherlin-Zilber conjecture is the one commonly known as the \emph{Borovik programme}. In this approach, one borrows techniques from finite group theory and, using these techniques, aims towards `the analogue of CFSG in the context of infinite simple groups of finite Morley rank'. Below we briefly explain the greatest achievements of the Borovik programme and describe the current state of the Cherlin-Zilber conjecture. This then allows us to introduce and motivate our results and framework in satisfactory manners.

The Sylow $2$-theory of groups of finite Morley rank is both of great importance and well-understood. In a group of finite Morley rank $G$ the Sylow $2$-subgroups are conjugate and the \emph{connected component} $P^\circ$ of a Sylow $2$-subgroup $P$ of $G$ is well-defined and its structure is known. Depending on the structure of $P^\circ$, groups of finite Morley rank are split into four types: \emph{Even}, \emph{odd}, \emph{mixed} and \emph{degenerate }types (see Section~\ref{subsec:Sylow} for more detail). It is proven in \cite{ABC} that no infinite simple group of finite Morley rank of mixed type exists and the main result in \cite{ABC} states that even type infinite simple groups of finite Morley rank are isomorphic to Chevalley groups over algebraically closed fields of characteristic $2$. Also, in \cite{Borovik-Burdges-Cherlin}, it is proven that if a connected group of finite Morley rank is of degenerate type, then it contains no involutions. So, the unverified part of the Cherlin-Zilber conjecture proposes that no degenerate type infinite simple group of finite Morley rank exists and that an odd type infinite simple group of finite Morley rank is isomorphic to a Chevalley group over an algebraically closed field of characteristic $\neq 2$.

Consider a Chevalley group $G$ over an algebraically closed field $K$. The maximal (finite) number of copies of the \emph{Pr\"{u}fer $2$-group} $$\mathbb{Z}_{2^\infty}= \{x \in \mathbb{C}^{\times} : x^{2^n}=1 \, {\rm for} \,{\rm some} \,  n \in \mathbb{N} \}$$ in $G$ is called the \emph{Pr\"{u}fer $2$-rank} of $G$ and it is denoted by ${\rm pr}_2(G)$. One may measure the `size' of $G$ by its Pr\"{u}fer $2$-rank. The only simple Chevalley group over $K$ of Pr\"{u}fer $2$-rank $1$ is ${\rm PGL}_2(K)={\rm PSL}_2(K)$. One may also describe the `size' of $G$ by its Lie rank. The only simple Chevalley group of Lie rank $1$, over any field $F$ with $|F| > 2$, is ${\rm PSL}_2(F)$.

Recently Fr\'{e}con proved that an infinite simple group $G$ of Morley rank $3$ is isomorphic to ${\rm PGL}_2(K)$, where $K$ is an algebraically closed field \cite{Frecon2018}. While this is a groundbreaking result, there is very few hope that general results can be obtained by induction on the Morley rank. Instead, one could try to obtain general results by induction on the Pr\"{u}fer $2$-rank. One should start by trying to prove that if $G$ is an infinite simple group of finite Morley rank of ${\rm pr}_2(G)=1$ then $G$ is isomorphic to ${\rm PGL}_2(K)$. Since we are still far away from such an identification, different stronger assumptions are developed for `small' groups of finite Morley rank. For example, in the past, different authors have studied infinite simple groups of finite Morley rank in which every proper definable connected subgroup is solvable; such groups are called \emph{minimal simple}. While minimal simple groups of Pr\"{u}fer $2$-rank $1$ are rather successfully studied (\cite{Jaligot2000, CJ, Deloro2007, DeloroJaligot2016}), it is not known whether such a group is always isomorphic to ${\rm PGL}_2(K)$. This illustrates that identifying a `small' infinite simple group of finite Morley rank with ${\rm PGL}_2(K)$ is a very hard task.

In this paper, we adopt two different smallness assumptions (see Theorem~\ref{th:main} and Theorem~\ref{main-stringer} below). Our main result is the following:

\begin{theorem}\label{th:main} Let $G$ be an infinite simple group of finite Morley rank with ${\rm pr}_2(G)=1$  admitting a supertight automorphism $\alpha$. Assume that the fixed-point subgroup $C_G(\alpha^n)$ is pseudofinite for all $n\in \mathbb{N}\setminus \{0\}$. Then $G \cong {\rm PGL}_2(K)$, where $K$ is an algebraically closed field of characteristic $\neq 2$. \end{theorem}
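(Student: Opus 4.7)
The plan is to exploit the tension between $G$'s finite Morley rank structure and the pseudofinite fixed-point subgroups, using supertightness of $\alpha$ as a bridge. The picture to keep in mind is $G = \mathrm{PGL}_2(K)$ with $\alpha$ a Frobenius-type automorphism and $C_G(\alpha^n) = \mathrm{PGL}_2(F_n)$ for a coherent family of pseudofinite subfields $F_n \subset K$; the task is to reconstruct this from the hypotheses.

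First I would set up the odd-type $2$-structure. Since the Pr\"ufer $2$-rank is $1$, pick a Sylow $2$-subgroup $S$ with $S^{\circ}\cong \mathbb{Z}_{2^\infty}$, and let $i$ be its unique involution. After replacing $\alpha$ by a conjugate we may assume $\alpha$ normalises $S^{\circ}$, and hence fixes $i$. Study a Borel subgroup $B \supseteq S^{\circ}$ and the centraliser $C_G(i)$; results of Deloro, Jaligot and Burdges on small odd-type groups give good control of the structure of $B$, which should take the form $T\ltimes U$ with $T$ a decent torus containing $S^{\circ}$ and $U$ a nilpotent normal subgroup. Supertightness of $\alpha$, together with its centralising $i$, then forces $\alpha$ to respect enough of this skeleton that $C_B(\alpha)$, $C_T(\alpha)$ and $C_U(\alpha)$ inherit pseudofiniteness, and makes the Morley ranks of $C_G(\alpha^n)$ uniform in $n$.

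Next I would apply the Wilson--Ryten classification of simple pseudofinite groups to a suitable simple pseudofinite section of $C_G(\alpha)$. Because the ambient Pr\"ufer $2$-rank is $1$ and supertightness transfers this smallness to the fixed-point side, the classification should identify this section, up to elementary equivalence, with $\mathrm{PGL}_2(F)$ for a pseudofinite field $F$ interpretable in $C_G(\alpha)$. Running the same analysis for every power $\alpha^n$ produces a compatible tower of pseudofinite fields $F_n$, all definable in $G$, together with matching Borel and torus fragments.

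Finally I would lift this field data to $G$ itself. Zilber's field theorem applied to the action of $T$ on $U$ should produce a definable infinite field, which is algebraically closed because $G$ has finite Morley rank; supertightness and the coherence established in the previous step should force this field to contain each $F_n$ as a definable subfield, pinning down $K$ canonically. Identifying $T$ with $K^\times$ and a root subgroup with $K^+$, one recognises the BN-pair of $\mathrm{PGL}_2(K)$ inside $G$ and concludes via standard algebraic recognition theorems. The main obstacle is the middle step: extracting a genuinely simple pseudofinite section of $C_G(\alpha)$ (rather than merely a pseudofinite almost-simple quotient) and matching its interpretable field $F$ with the algebraic structure carried by the rest of $G$, all while keeping everything compatible with the odd-type $2$-skeleton and uniform across powers of $\alpha$. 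Supertightness must be squeezed for everything it is worth here, since it is the only mechanism guaranteeing that the pseudofinite picture visible in each $C_G(\alpha^n)$ glues into a single algebraically closed $K$.
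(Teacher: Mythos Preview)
Your high-level picture is right, but the direction of the argument is reversed from what actually works, and two concrete steps fail as stated.

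First, you try to start from the internal structure of $G$: pick a Borel $B\supseteq S^\circ$ of $G$, hope it decomposes as $T\ltimes U$, and then match this against the pseudofinite side. But for an arbitrary odd-type simple group of finite Morley rank and Pr\"ufer $2$-rank $1$, no such Borel structure is known a priori; that is precisely part of what the theorem is meant to establish. The paper runs the argument the other way. Using U\u{g}urlu's earlier theorem (which you do not invoke, and which is the real engine behind your ``extract a simple pseudofinite section'' step), each $C_G(\alpha^n)$ contains a normal subgroup $S_n$ isomorphic to a (twisted) Chevalley group over a pseudofinite field, with $\overline{S_n}=G$. A Sylow $2$-analysis then pins down $S_n\cong\mathrm{PSL}_2(F_n)$ with $\mathrm{char}\,F_n\neq 2$. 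Now one takes the \emph{known} subgroups $U,T,B,w$ of $S\cong\mathrm{PSL}_2(F)$ and forms their definable closures $\overline{U},\overline{T},\overline{B}$ in $G$; supertightness is used repeatedly via the identity $\overline{H\cap P}=H$ for $\alpha$-invariant connected definable $H$, to transfer structural facts from the pseudofinite side to $G$. The Borel-like object $\overline{B}=\overline{U}\rtimes\overline{T}$ is \emph{built}, not assumed.

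Second, your proposed endgame is off. You want Zilber's field theorem to produce $K$ and then argue that each $F_n$ sits inside $K$ as a definable subfield. But an algebraically closed field of finite Morley rank has no proper infinite definable subfield, so that step cannot go through as written. The paper does not construct $K$ this way at all: instead it proves, by rank computations and control of $G$-conjugates of $\overline{U}$ (showing $\overline{B}\cap\overline{U}^{\,g}=1$ for $g\notin\overline{B}$), that $G$ acts on the coset space $G/\overline{B}$ as a split Zassenhaus group with two-point stabiliser $\overline{T}$ containing an involution. The Delahan--Nesin identification theorem then yields $G\cong\mathrm{PGL}_2(K)$ directly, with $K$ coming out of that theorem. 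A genuinely delicate intermediate step you do not anticipate is proving $C_G(\alpha)=\mathrm{PGL}_2(F)$ exactly (killing the field-automorphism part of $\mathrm{Aut}(\mathrm{PSL}_2(F))$); this is needed to get clean control of centralisers and normalisers before the Zassenhaus identification can be run.
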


Theorem~\ref{th:main} is obtained as a corollary of the following two results that we also prove. We wish to mention that Theorem~\ref{main-stringer} is formulated so that it can be directly applied in the ongoing work of the authors and Adrien Deloro \cite{Deloro-Karhumaki-Ugurlu}.

\begin{theorem}\label{main-stringer} Let $G$ be a connected group of finite Morley rank of odd type with a supertight automorphism $\alpha$. Assume that the following holds for all $n\in \mathbb{N}\setminus\{0\}$. \begin{enumerate}[(1)]
\item $P_n:=C_{G}(\alpha^n)$ is pseudofinite and for any non-trivial $ H_n\unlhd\unlhd P_n$, we have $C_{P_n}(H_n)=1$.
\item ${\rm Soc}(P_n) \cong {\rm PSL}_2(F_n)$ where  $F_n$ is a pseudofinite field of characteristic $\neq 2$. 
\item $\overline{{\rm Soc}(P_n)}=G$.
\end{enumerate} Then $G \cong {\rm PGL}_2(K)$, where $K$ is an algebraically closed field of characteristic $\neq 2$.\end{theorem}

\begin{propo}\label{propo:S} Let $G$ be a connected group of finite Morley rank of odd type with ${\rm pr}_2(G)=1$ and $H$ be a simple pseudofinite subgroup of $G$. Then $H\cong {\rm PSL}_2(F)$, where $F$ is a pseudofinite field of characteristic $\neq 2$. \end{propo}

Theorem~\ref{th:main} belongs to a project initiated by the second author in \cite{Ugurlu2009, Ugurlu2013} (and considered by the first author in \cite{karhumaki2019}). To explain this approach, we need to introduce more terminology and results.

An infinite structure (e.g. a group or a field) is called \emph{pseudofinite} if it satisfies every first-order property that is true in all finite structures. Pseudofinite fields were characterised in purely algebraic terms by Ax \cite{Ax1968}. While no such algebraic characterisation exists for pseudofinite groups, simple pseudofinite groups are classified as (twisted) Chevalley groups over pseudofinite fields \cite{Wilson1995}.

Algebraically closed fields with a generic automorphism were first studied by Macintyre in \cite{Macintyre1997}. Further, in \cite{CH1999}, Chatzidakis and Hrushovski axiomatised, and studied in depth, the first-order theory of algebraically closed fields with a generic automorphism. This theory is called ${\rm ACFA}$. It is known that if $(K,\sigma)$ is a model of ${\rm ACFA}$ then the fixed points ${\rm Fix}_{K}(\sigma)$ form a pseudofinite field (one should note that the axioms of pseudofinite fields are used to prove that ${\rm Fix}_{K}(\sigma)$ is pseudofinite).

In \cite{Hrushovski2002}, Hrushovski studied structures which satisfy certain nice model-theoretic properties and which have a generic automorphism. He proved that in this context the fixed-point subgroups of the generic automorphisms are pseudo-algebraically closed with small Galois groups. Further, he proved that any fixed-point subgroup arising this way admits a certain kind of measure which is similar to a non-standard probabilistic measure on pseudofinite groups. In the particular case of infinite simple groups of finite Morley rank, the aim is to prove that the fixed-points of a generic automorphism form a pseudofinite group. Indeed, in \cite{Ugurlu2009}, the following conjecture is formulated, from the results and observations of Hrushovski in \cite{Hrushovski2002}.

\begin{conjecture}\label{conj:principal}Let $G$ be an infinite simple group of finite Morley rank with a generic automorphism $\alpha$. Then the fixed-point subgroup $C_G(\alpha)$ is pseudofinite. \end{conjecture}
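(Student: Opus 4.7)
My plan is to attack the Principal Conjecture through the trichotomy for infinite simple groups of finite Morley rank (degenerate / even / odd type) combined with the model-theoretic machinery for generic automorphisms of $\omega$-stable theories. Since $G$ has finite Morley rank, ${\rm Th}(G)$ is $\omega$-stable, so $(G,\alpha)$ sits inside a well-behaved companion theory. By Hrushovski's analysis in \cite{Hrushovski2002}, $C_G(\alpha)$ automatically carries a nonstandard `measure' whose axioms are formally very close to those of a counting measure on a finite group, together with a `pseudo-algebraically closed with small Galois group' structure. The overall strategy is to promote this measure-theoretic control to genuine pseudofiniteness by matching $C_G(\alpha)$ against a non-principal ultraproduct of finite groups of Lie type, invoking Wilson's classification of simple pseudofinite groups \cite{Wilson1995} once enough structure is in place.

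In even type, \cite{ABC} already gives $G \cong H(K)$ with $H$ a Chevalley group and $K$ algebraically closed of characteristic $2$. The automorphism $\alpha$ then preserves the definable field $K$, so $\alpha|_K$ is generic for ${\rm ACF}_2$ and ${\rm Fix}_K(\alpha)$ is pseudofinite by the Macintyre / Chatzidakis--Hrushovski result \cite{CH1999}; consequently $C_G(\alpha) = H({\rm Fix}_K(\alpha))$ is pseudofinite and the conjecture holds. The real work is in odd type: one would use the rich Sylow $2$-theory and the abundance of decent tori to interpret a definable algebraically closed field $K$ inside $G$ (or inside an appropriate section of $G$), show that $\alpha$ restricts to a generic automorphism of $K$, and then transfer the pseudofiniteness of ${\rm Fix}_K(\alpha)$ up to $C_G(\alpha)$ using the Borel-like subgroups and Bruhat-type decompositions provided by the Borovik programme. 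Degenerate type is conjecturally empty, and here one would hope either to rule it out directly (no generic automorphism can exist on a hypothetical degenerate simple group of finite Morley rank) or to leverage the measure from \cite{Hrushovski2002} together with Hrushovski's $\mathrm{PAC}$-type results to force the presence of the involutions needed to apply Wilson's theorem.

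The main obstacle is the absence of an algebraic characterisation of pseudofinite groups analogous to Ax's theorem for fields. Unlike the field case, one cannot certify pseudofiniteness by checking a short list of first-order axioms, so one must either classify $C_G(\alpha)$ directly via Wilson's theorem---which requires producing enough internal algebraic structure to identify $C_G(\alpha)$ as a Chevalley group over a pseudofinite field---or construct the finite approximations by hand. This is not merely a technical nuisance: it means that proving the Principal Conjecture essentially amounts to identifying $G$ with an algebraic group finely enough to see $\alpha$ as a Frobenius-like map, which is the Cherlin--Zilber conjecture in disguise in this setting. This is precisely why Theorem~\ref{th:main} sidesteps the obstruction by assuming pseudofiniteness of all the $C_G(\alpha^n)$'s together with supertightness and Pr\"ufer $2$-rank $1$, and then uses these hypotheses to recover ${\rm PGL}_2(K)$ rather than attacking the conjecture head-on.
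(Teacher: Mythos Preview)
The statement you are attempting to prove is a \emph{conjecture}: the paper does not prove it, and indeed the paper explains that it is expected to be equivalent to the Cherlin--Zilber conjecture. So there is no ``paper's own proof'' to compare against, and your task reduces to assessing whether the proposal actually constitutes a proof. It does not; it is an honest sketch of a research programme together with an accurate diagnosis of why that programme is blocked.

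Concretely, even your ``easy'' even-type case has gaps. From $G\cong H(K)$ one does get that $\alpha$ induces an automorphism of the interpretable field $K$, but you assert without justification that this induced automorphism is \emph{generic} for $\mathrm{ACF}_2$; genericity of $\alpha$ for $\mathrm{Th}(G)$ does not immediately transfer to genericity of $\alpha|_K$ for $\mathrm{Th}(K)$. You also write $C_G(\alpha)=H(\mathrm{Fix}_K(\alpha))$, which ignores the possibility of graph and diagonal components in $\alpha$ and the distinction between fixed points of $\alpha$ on $G$ and the $\mathrm{Fix}_K(\alpha)$-rational points of $H$; in general one only gets a twisted form, and pseudofiniteness of that requires further argument. (The paper notes that the implication ``Cherlin--Zilber $\Rightarrow$ Principal'' follows from \cite{CH1999}, but that is a nontrivial theorem, not the two-line computation you give.)

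The odd and degenerate cases are, as you yourself say, wide open: ``interpret a definable algebraically closed field $K$ inside $G$'' and ``transfer pseudofiniteness up to $C_G(\alpha)$'' are restatements of the problem, not steps toward a solution, and ruling out degenerate type is a famous open question. Your final paragraph is entirely correct and in fact undercuts the rest of the proposal: lacking an Ax-type characterisation of pseudofinite groups, certifying pseudofiniteness of $C_G(\alpha)$ essentially requires identifying $G$ as an algebraic group first, which is Cherlin--Zilber. That is exactly why the paper \emph{assumes} pseudofiniteness of the $C_G(\alpha^n)$ and works in the other direction.
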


It follows from the results of Chatzidakis and Hrushovski \cite{CH1999} that the Cherlin-Zilber conjecture implies Conjecture~\ref{conj:principal}. There is an expectation that these two conjectures are actually equivalent and this expectation is supported by the results of the second author in \cite{Ugurlu2013}; she developed the following strategy towards proving the reverse implication. Below, we briefly introduce this strategy.

In order to work in a purely algebraic context, the second author considered a \emph{(super)tight} automorphism $\alpha$ (Definition~\ref{def:tight}) of an infinite simple group of finite Morley rank $G$. She proved that if $G$ admits $\alpha$ whose fixed-point subgroup $C_G(\alpha)$ is pseudofinite, then $C_G(\alpha)$ contains a simple pseudofinite (twisted) Chevalley subgroup $S$ so that $G$ is equal to the definable closure of $S$ in $G$. This gives hope for the identification of $G$ with an algebraic group over an algebraically closed field. Namely, to prove the expected equivalence between the Cherlin-Zilber conjecture and Conjecture~\ref{conj:principal}, it is enough to prove the following steps.

\begin{enumerate}[(i)]
\item \emph{Algebraic identification step}: Show that if the pseudofinite (twisted) Chevalley group $S$ has Lie rank $k$, then $G$ is isomorphic to a Chevalley group of Lie rank $k$ over an algebraically closed field $K$.
\item \emph{Model-theoretic step}: Prove that a generic automorphism of $G$ is supertight.
\end{enumerate} 

As our results ensure that ${\rm pr}_2(G)=1$ if and only if the Lie rank of $S$ is $1$, Theorem~\ref{th:main} proves the algebraic identification step in the case when $S$ is of Lie rank $1$. The case `$S$ is of Lie rank $1$' is one of the two crucial parts of the algebraic identification step; the other one being the case `$S$ is of Lie rank $2$'. If the analogue of Theorem~\ref{main-stringer} holds for Lie rank $2$ (i.e. replace `${\rm PSL}_2(F_n)$' in Theorem~\ref{main-stringer} by `a simple Chevalley group $X(F_n)$ of Lie rank $2$'), then there is a clear strategy for the proof of the algebraic identification step \cite{Deloro-Karhumaki-Ugurlu}.

This paper is organised as follows. In Section~\ref{definitions}, we give  background results that are needed in the proofs of Theorem~\ref{th:main}, Theorem~\ref{main-stringer} and Proposition~\ref{propo:S}. In Section~\ref{Sec:pseudofinite}, we prove Proposition~\ref{propo:S} and describe the structure of pseudofinite groups which satisfy some properties of the group $C_G(\alpha)$ in Theorem~\ref{main-stringer}. The definition (from \cite{Ugurlu2009}) of a supertight automorphism is given in Section~\ref{sec:tight_autom}. Then, in Section~\ref{Sec:proofs}, we prove Theorem~\ref{th:main} and Theorem~\ref{main-stringer}.

\section{Preliminaries}\label{definitions} 


\subsection{Automorphisms of Chevalley groups}\label{subsec:autom} We denote a Chevalley group of \emph{Lie type} $X$ over an arbitrary field $k$ by  $X(k)$, where $X$ comes from the list $A_n $, $B_n$, $C_n$, $D_n$, $E_6$, $E_7$, $E_8$, $F_4$, $G_2$; the subscript is called the \emph{Lie rank} of $X(k)$. If the Dynkin diagram has a non-trivial symmetry and the field $k$ satisfies suitable additional conditions, then $X(k)$ may be of \emph{twisted type}. We refer the reader unfamiliar with (twisted) Chevalley groups to \cite{Carter1971}.

There are four types of automorphisms of a (twisted) Chevalley group $X(k)$; called \emph{inner}, \emph{diagonal}, \emph{field} and \emph{graph} automorphism. We denote by ${\rm Aut}(X)$, ${\rm Inn}(X)$, ${\rm Diag}(X)$, ${\rm Aut}(k)$ and ${\rm Grp}(X)$ the group of all, inner, diagonal, field and graph automorphisms of $X(k)$, respectively.

\begin{theorem}\label{th:autom-finitegroups} Let $H=X(F)$ be a simple (twisted) Chevalley group over a perfect field $F$. Then \begin{enumerate}[(1)]
\item \emph{({\cite[Theorem 2.5.12]{GLS3}}).} ${\rm Aut}(X) =  {\rm Inn}(X){\rm Diag}(X)\rtimes {\rm Grp}(X) {\rm Aut}(F).$
\item \emph{(\cite{Steinberg1960}).} If $F$ is finite then $|{\rm Diag}(X)|$ and $| {\rm Grp}(X) |$ only depend on the Lie type $X$ of $H$.
\end{enumerate}\end{theorem}

\subsection{Pseudofinite groups}\label{subsec:pseudofinite}
Fix a language $\mathcal{L}$. A \emph{definable set} in an
$\mathcal{L}$-structure $\mathcal{M}$ is a subset $X\subseteq M^n$ which is the set of realisations of a first-order $\mathcal{L}$-formula $\varphi$. Throughout this paper, definable means definable possibly with parameters and we consider groups (resp. fields) as structures in the pure group (resp. field) language $\mathcal{L}_{gr}$. An \emph{$\mathcal{L}$-sentence} is an $\mathcal{L}$-formula in which all variables are bound by a quantifier. Two $\mathcal{L}$-structures $\mathcal{M}$ and $\mathcal{N}$ are \emph{elementarily equivalent}, denoted by $\mathcal{M} \equiv \mathcal{N}$, if they satisfy the same $\mathcal{L}$-sentences.

\begin{definition}A \emph{pseudofinite} group is an infinite group which satisfies every first-order sentence of $\mathcal{L}_{gr}$ that is true of all finite groups. \end{definition}

Let $\{ \mathcal{M}_i : i \in I \}$ be a family of
$\mathcal{L}$-structures and $\mathcal{U}$ be a non-principal
ultrafilter on $I$. We say that a property $P$
holds \emph{for almost all $i$} if $\{i : P \, \, {\rm holds \, \,
for}  \, \, \mathcal{M}_i \} \in \mathcal{U}$. Define $\sim_{\mathcal{U}}$ on $\prod_{i \in I} \mathcal{M}_i$ by $x \sim_{\mathcal{U}} y $ if and only if $\{i \in I : x(i)=y(i)\} \in \mathcal{U}$. The $\mathcal{L}$-structure
$\prod_{i \in I} \mathcal{M}_i /\sim_{\mathcal{U}}$ is denoted by $ \prod_{i \in I} \mathcal{M}_i / \mathcal{U}$ and called
the \emph{ultraproduct} of the $\mathcal{L}$-structures
$\mathcal{M}_i$ with respect to the ultrafilter $\mathcal{U}$. By \L o\'{s}'s Theorem, an infinite group (resp. $\mathcal{L}$-structure) is pseudofinite if and only if it is elementarily equivalent to a non-principal ultraproduct of finite groups (resp. $\mathcal{L}$-structures) of increasing orders, see \cite{Macpherson2018}.

Pseudofinite fields were axiomatised by Ax in \cite{Ax1968} as perfect pseudoalgebraically closed fields having exactly one extension of degree $n$ (in a fixed algebraic closure) for every positive integer $n$. As explained in \cite[Section 3]{Macpherson2018}, the following classification mostly follows from the work in \cite{Wilson1995}.

\begin{theorem}[Wilson \cite{Wilson1995}]\label{th:wilson} A simple group is
pseudofinite if and only if it is isomorphic to a (twisted)
Chevalley group over a pseudofinite field. \end{theorem}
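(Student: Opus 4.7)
The plan is to prove the two directions separately, following the contributions of Point, Wilson, and Ryten. For the ``if'' direction, suppose $X(F)$ is a (twisted) Chevalley group over a pseudofinite field $F$. Writing $F \equiv \prod_{i \in I} F_i / \mathcal{U}$ as a non‑principal ultraproduct of finite fields of increasing size (possible by the definition of pseudofiniteness), I would exploit the fact that the Chevalley construction $k \mapsto X(k)$ is \emph{uniformly interpretable}: the underlying set and multiplication of $X(k)$ are defined by a single first‑order formula in the language of fields, independent of $k$. By {\L}o\'{s}'s Theorem this yields $X(F) \equiv \prod_i X(F_i) / \mathcal{U}$, and since each $X(F_i)$ is a finite (twisted) simple group of Lie type for $|F_i|$ large enough, $X(F)$ is pseudofinite. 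Simplicity and infiniteness transfer because both can be read from the uniform interpretation in $F$.

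For the ``only if'' direction, let $G$ be a simple pseudofinite group, so $G \equiv G^* := \prod_i G_i / \mathcal{U}$ with each $G_i$ finite. The first step, due to Wilson, is to arrange that almost all $G_i$ may be taken to be finite \emph{simple} groups. Since simplicity is not first‑order, this requires a careful analysis of definable normal subgroups and of the socle of $G_i$, combined with CFSG. The second step is to invoke CFSG to classify the possible isomorphism types appearing on a set of positive $\mathcal{U}$-measure: alternating groups $A_n$ can be excluded since $G^*$ would then interpret the infinite symmetric group in a way incompatible with the simple pseudofinite structure, and the sporadic groups contribute only finitely many isomorphism types and hence cannot form a set of positive $\mathcal{U}$-measure if the $|G_i|$ are unbounded.

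The third step is a pigeonhole argument on $\mathcal{U}$: once we are left with finite simple groups of Lie type, there are only countably many (Lie type, twist) pairs, and each individual field parameter $F_i$ can be extracted definably inside $G_i$. After possibly refining $\mathcal{U}$, we may assume all $G_i$ are of the form $X(F_i)$ for a \emph{single} Lie type and twist $X$. Then $G \equiv \prod_i X(F_i)/\mathcal{U} \cong X(\prod_i F_i / \mathcal{U}) = X(F)$, where $F$ is pseudofinite by Ax's characterization. This gives Wilson's conclusion up to elementary equivalence. The final step, due to Ryten, upgrades $\equiv$ to $\cong$: one shows that for each fixed Lie type $X$, the field $F$ is definable inside $X(F)$ by a formula independent of $F$, and moreover that $X(F)$ and $F$ are uniformly bi‑interpretable. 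Consequently, if $G$ is any model of $\mathrm{Th}(X(F))$, the copy of the field recovered inside $G$ is elementarily equivalent to $F$, and the group reconstructed from it from the Chevalley construction is literally $G$.

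The main obstacle is the first step of the hard direction: there is no first‑order way to say ``simple,'' so passing from a simple pseudofinite group to an ultraproduct of finite simple groups requires CFSG together with a uniform analysis of the socles of the $G_i$. The Ryten upgrade is the second real hurdle: one must exhibit explicit uniform interpretations of the base field inside each Chevalley type (the low‑rank and twisted cases, such as the Suzuki and Ree groups, being the delicate ones), which requires genuinely group‑theoretic work beyond the softer ultraproduct arguments used for Wilson's elementary‑equivalence version.
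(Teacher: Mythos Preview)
The paper does not actually prove this theorem; it is stated as a background result with attributions to Wilson, Ryten, and Point, followed only by a remark noting the reliance on CFSG. Your outline tracks the decomposition the paper describes in the paragraph preceding the statement (Point for the ``if'' direction, Wilson for elementary equivalence in the ``only if'' direction, Ryten for the upgrade to isomorphism), so in that sense your approach is exactly the intended one, and there is nothing further to compare.

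That said, two points in your sketch are imprecise. First, your exclusion of alternating groups is not how Wilson's argument runs: the issue is not that the ultraproduct ``interprets the infinite symmetric group,'' but rather that Wilson produces uniformly definable subgroups (via commutator-width and centralizer formulas) which, in an ultraproduct of $A_{n_i}$ with $n_i \to \infty$, yield proper nontrivial normal subgroups, contradicting simplicity of $G$. Second, the passage from ``$G$ simple pseudofinite'' to ``almost all $G_i$ simple'' is the substantive content of Wilson's paper and is not merely ``arranging'' things; it requires exhibiting specific first-order formulas that detect the socle structure uniformly across finite groups, and this is where CFSG enters essentially. Your summary of Ryten's step is accurate, including the caveat about the Suzuki and Ree families, where the bi-interpretability is with a pair $(F,\sigma)$ rather than with $F$ alone.
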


\begin{remark}As one expects, CFSG is used in
Theorem~\ref{th:wilson}, which in turn is used in our proofs.\end{remark}

\subsection{Groups of finite Morley rank}\label{subsec:fRM} Groups of finite Morley rank are groups equipped with a notion of dimension which assigns to every definable set $X$ a dimension, called the \emph{Morley rank} and denoted by ${\rm rk}(X)$, satisfying well-known axioms given in \cite{Borovik-Nesin, ABC}. Below we list basic properties (also given in \cite{Borovik-Nesin, ABC}) of groups of finite Morley rank which the reader should bear in mind throughout the paper.

\begin{enumerate}[-]
\item Important examples of groups of finite Morley rank are algebraic groups over algebraically closed fields, where the Morley rank coincides with the Zariski dimension. Reader unfamiliar with the topic should systematically keep this example in mind.
\item A group of finite Morley rank $G$ do not have infinite descending chains of definable subgroups and in $G$ the length of any proper chain of centralisers is bounded.  
\item Using the chain condition above, we may define the \emph{connected component} $H^\circ$ of any subgroup $H$ of a group of finite Morley rank $G$ and the \emph{definable closure} $\overline{X}$ of any subset $X \subseteq G$: If $L \leqslant G$ is definable then $L^\circ$ is the intersection of definable subgroups of finite indices in $L$, $\overline{X}$ is the intersection of all definable subgroups of $G$ containing $X$, and, for any $H \leqslant G$, $H^\circ=H \cap \overline{H}^\circ$.
\end{enumerate}

\begin{fact}[{\cite[Lemma 2.15]{ABC}}]\label{fact:def_closure}
Let $G$ be a group of finite Morley rank and $X\subseteq G$. Then the following hold.\begin{enumerate}[(1)]
	\item If a subgroup $A \leqslant G$ normalises the set $X$, then $\overline{A}$ normalises $\overline{X}$.
	\item $C_G(X)= C_G(\overline{X})$.
	\item $\overline{N_G(X)} \leqslant N_G(\overline{X})$.
	\item For a subgroup $A \leqslant G$, $\overline{A^i}=\overline{A}^i$ and $\overline{A^{(i)}}=\overline{A}^{(i)}$.
	\item If  a subgroup $A \leqslant G$ is solvable (resp. nilpotent) of class $d$, then $\overline{A}$ is also solvable (resp. nilpotent) of class $d$. In particular, if $A$ is abelian then so is $\overline{A}$.
	\item \emph{(\cite[Lemma 5.35 (iii)]{Borovik-Nesin}).} Let $A \leqslant B \leqslant G$. If $A$ has finite index in $B$ then $\overline{A}$ has finite index in $\overline{B}$.
\end{enumerate}
\end{fact}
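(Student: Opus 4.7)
The strategy throughout is to exploit that $\overline{X}$ is, by the DCC for definable subgroups (Fact~\ref{fact:DCC-on-centralisers}), the smallest definable subgroup of $G$ containing $X$. This minimality, combined with the definability of centralisers, normalisers, and---crucially for (4)---commutator subgroups of definable subgroups in a group of finite Morley rank (via Zilber indecomposability), handles each item by one of two moves: either exhibit a definable subgroup $H\supseteq X$ and deduce $\overline{X}\leqslant H$, or push $X$ through a definable map into a definable set and transfer the same conclusion to $\overline{X}$.

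For (1), fix $a\in A$: conjugation by $a$ is a definable bijection of $G$, so $a\overline{X}a^{-1}$ is a definable subgroup containing $aXa^{-1}=X$; minimality forces $a\overline{X}a^{-1}\supseteq\overline{X}$, and symmetry gives equality. Hence $A\leqslant N_G(\overline{X})$, and since $N_G(\overline{X})$ is definable, $\overline{A}\leqslant N_G(\overline{X})$. Item (3) is the identical argument with $N_G(X)$ in place of $A$. For (2), $C_G(\overline{X})\subseteq C_G(X)$ is trivial; conversely, for $g\in C_G(X)$ the fixer $\{h\in G:[g,h]=1\}$ is a definable subgroup containing $X$, hence contains $\overline{X}$.

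For (4), induction reduces us to $\overline{[A,A]}=[\overline{A},\overline{A}]$. Since $[\overline{A},\overline{A}]$ is definable and contains $[A,A]$, we get $\overline{[A,A]}\leqslant[\overline{A},\overline{A}]$ directly. For the reverse, first observe that $\overline{[A,A]}$ is normalised by $\overline{A}$: indeed $[A,A]$ is normal in $A$, so (1) applies. Fix $a\in A$: using standard commutator identities and this normality, $S_a:=\{x\in\overline{A}:[a,x]\in\overline{[A,A]}\}$ is a definable subgroup of $\overline{A}$ containing $A$, hence containing $\overline{A}$; so $[a,\overline{A}]\subseteq\overline{[A,A]}$. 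Now fix $y\in\overline{A}$ and rerun the argument with the map $x\mapsto[x,y]$ to obtain $T_y:=\{x\in\overline{A}:[x,y]\in\overline{[A,A]}\}\supseteq\overline{A}$, whence $[\overline{A},y]\subseteq\overline{[A,A]}$. Varying $y$ gives $[\overline{A},\overline{A}]\leqslant\overline{[A,A]}$. The same swap adapts to $\overline{[A,B]}=[\overline{A},\overline{B}]$, handling the lower central terms. Item (5) is then immediate: $A^{(d)}=1$ forces $\overline{A}^{(d)}=\overline{A^{(d)}}=\{1\}$, and analogously for nilpotency.

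Item (6) is the most delicate and I expect it to be the main technical obstacle. Write $B=\bigsqcup_{i=1}^n g_iA$ with $g_i\in B$, and set $K=\bigcap_{i=1}^n g_i\overline{A}g_i^{-1}$, a definable subgroup. Since $A\leqslant\overline{A}$, left multiplication by any $b\in B$ permutes the cosets $\{g_i\overline{A}\}_{i=1}^n$, hence $B\leqslant N_G(K)$; definability of $N_G(K)$ then yields $K\trianglelefteq\overline{B}$. An orbit-stabiliser argument on the $B$-action on $G/\overline{A}$ gives $[B:B\cap\overline{A}]\leqslant n$, so $B\cap K=\bigcap_i g_i(B\cap\overline{A})g_i^{-1}$ has finite index in $B$. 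Consequently $BK$ is a finite union of cosets of $K$---hence definable---and is a subgroup because $K\trianglelefteq\overline{B}$. As $B\subseteq BK\leqslant\overline{B}$, minimality of $\overline{B}$ forces $\overline{B}=BK$, so $\overline{B}/K\cong B/(B\cap K)$ is finite. Combined with $K\leqslant\overline{A}$, this gives $[\overline{B}:\overline{A}]\leqslant[\overline{B}:K]<\infty$, as required.
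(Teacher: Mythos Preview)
The paper does not prove this statement; it records it as a background fact, citing \cite[Lemma~2.15]{ABC} and \cite[Lemma~5.35(iii)]{Borovik-Nesin}. Your proof is correct and follows the standard textbook route for each item: the minimality characterisation of definable closure handles (1)--(3) directly; the two-step ``freeze one coordinate, then the other'' trick for (4) is exactly how the commutator identity $\overline{[A,A]}=[\overline{A},\overline{A}]$ is established in the references (your appeal to Zilber indecomposability for the definability of $[\overline{A},\overline{A}]$ is the right justification); and your argument for (6)---intersecting finitely many conjugates of $\overline{A}$ to produce a definable normal subgroup $K$ of finite index, then using $\overline{B}=BK$---is essentially the proof in Borovik--Nesin. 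There is nothing to correct and nothing materially different from the cited sources.
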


The rest of the basic results of the topic are not mentioned here, instead, specific references are provided at the technical moments. What really matters in our proofs are \emph{$2$-tori} and hence we next focus on the Sylow $2$-theory.

\subsubsection{Sylow $2$-theory}\label{subsec:Sylow} As mentioned in the introduction, the Sylow $2$-theory of groups of finite Morley rank is well-understood: In a group of finite Morley rank the Sylow $2$-subgroups are conjugate and the connected component $P^\circ$ of a Sylow $2$-subgroup $P$ of $G$ is the central product $U \ast T$  where $U$ is a \emph{$2$-unipotent} group (a definable and nilpotent $2$-group of bounded exponent) and $T$ is a \emph{$2$-torus} (a divisible abelian $2$-group) \cite[I.6]{ABC}. Groups of finite Morley rank can be split into four cases based on the structure of $P^\circ $ as follows:
\begin{enumerate}
\item \emph{Even type}:  $P^\circ$ is a non-trivial \emph{$2$-unipotent} group (that is $U\neq 1$ and $T=1$).
\item \emph{Odd type}:  $P^\circ$ is a non-trivial \emph{$2$-torus} (that is $U= 1$ and $T\neq 1$).
\item \emph{Mixed type}: $P^\circ$ is a central product of a non-trivial $2$-unipotent group and a non-trivial $2$-torus (that is $U\neq 1$ and $T \neq 1$).
\item \emph{Degenerate type}: $P^\circ$ is trivial (that is $U= 1$ and $T=1$).
\end{enumerate} 

If the ambient group $G$ is infinite and simple, then $G$ cannot be of mixed type \cite{ABC}. Also, in this case, either $P^\circ=1$ or $P^\circ$ is infinite \cite{Borovik-Burdges-Cherlin}. So an infinite simple group of finite Morley rank either contains no involutions or has infinite Sylow $2$-subgroups so that either $P^\circ=U$ or $P^\circ=T$. As even type infinite simple groups of finite Morley rank satisfy the Cherlin-Zilber conjecture \cite{ABC}, when trying to identify an infinite simple group of finite Morley rank $G$ with involutions, one may assume that $G$ is of odd type. Note that the connected component of a Sylow $2$-subgroup of a group of finite Morley rank of odd type is a direct product of finitely many copies of the Pr\"{u}fer $2$-group $\mathbb{Z}_{2^\infty}$.

The following results will be useful in our proofs.

\begin{fact}[Deloro and Jaligot {\cite[Proposition 27]{Deloro-Jaligot}}]\label{th:deloro-jaligot-sylow} Let $G$ be a connected group of finite Morley rank of odd type and with ${\rm pr}_2(G)=1$. Then there are exactly three possibilities for the isomorphism type of a Sylow $2$-subgroup $P$ of $G$.
\begin{enumerate}[(1)]
\item $P=P^\circ \cong \mathbb{Z}_{2^\infty}$.
\item $P=P^\circ \rtimes \langle w \rangle \cong \mathbb{Z}_{2^\infty} \rtimes \langle w \rangle$ for some involution $w$ which inverts $P^\circ$.
\item $P=P^\circ \cdot \langle w \rangle  \cong \mathbb{Z}_{2^\infty} \cdot \langle w \rangle$ for some element $w$ of order $4$ which inverts $P^\circ$. \end{enumerate}\end{fact}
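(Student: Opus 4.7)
The plan is to first pin down the connected component ${\rm Syl}_G^\circ$, then to establish that ${\rm Syl}_G/{\rm Syl}_G^\circ$ has order at most $2$ with any outer coset acting by inversion, and finally to classify the two possible extensions through a finite $2$-subgroup analysis. The hardest step is the middle one, specifically the claim that no element outside ${\rm Syl}_G^\circ$ centralises it.

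Since $G$ is of odd type with ${\rm pr}_2(G)=1$, the discussion recalled in the excerpt gives ${\rm Syl}_G^\circ \cong \mathbb{Z}_{2^\infty}$. The automorphism group $\operatorname{Aut}(\mathbb{Z}_{2^\infty}) \cong \mathbb{Z}_2^{\times}$ has $2$-power torsion exactly $\{\pm 1\}$, so each element $w \in {\rm Syl}_G$ (being of $2$-power order) acts on ${\rm Syl}_G^\circ$ either trivially or by inversion; in particular $[{\rm Syl}_G : C_{{\rm Syl}_G}({\rm Syl}_G^\circ)] \leq 2$.

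To promote this to $C_{{\rm Syl}_G}({\rm Syl}_G^\circ) = {\rm Syl}_G^\circ$ --- the main obstacle --- I would exploit the maximality of ${\rm Syl}_G^\circ$ as a $2$-torus in $G$ (from Sylow conjugacy, Theorem~\ref{th:Sylow-2-conjugate}) combined with the connectedness of $C_G({\rm Syl}_G^\circ)$ from Theorem~\ref{th:good_torus_centraliser}. If a $2$-element $j \in {\rm Syl}_G \setminus {\rm Syl}_G^\circ$ centralised ${\rm Syl}_G^\circ$, then the abelian $2$-subgroup $\langle j, {\rm Syl}_G^\circ \rangle$, upon passing to its definable closure via Fact~\ref{fact:def_closure}, should produce a $2$-torus properly extending ${\rm Syl}_G^\circ$, contradicting maximality.

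Granted this, either ${\rm Syl}_G = {\rm Syl}_G^\circ$ (case (1)) or there is $w \in {\rm Syl}_G \setminus {\rm Syl}_G^\circ$ inverting ${\rm Syl}_G^\circ$. In the second situation, a short computation from $wtw^{-1}=t^{-1}$ yields $(wt)^2 = w^2$ for every $t \in {\rm Syl}_G^\circ$, so $w^2 \in {\rm Syl}_G^\circ$ is intrinsic to the outer coset. To pin it down I would take $t \in {\rm Syl}_G^\circ$ of large order $2^k$ (chosen so that $w^2 \in \langle t \rangle$) and consider the finite $2$-subgroup $F = \langle w, t \rangle$, a non-abelian group of order $2^{k+1}$ containing $\langle t \rangle$ as a cyclic subgroup of index $2$ on which $w$ acts by inversion. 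The classification of finite $2$-groups with a cyclic subgroup of index $2$ (with Fact~\ref{gorenstein-ex} available, since such an $F$ has no normal Klein $4$-subgroup when $k \geq 3$) leaves only dihedral and generalised quaternion: the abelian options are ruled out by non-abelianness, and semidihedral (and the modular maximal-cyclic option) by the induced action on $\langle t \rangle$ being $a \mapsto a^{2^{k-1}-1}$ (respectively $a \mapsto a^{1+2^{k-1}}$), never inversion. Hence $F$ is dihedral, forcing $w^2 = 1$ (case (2)), or generalised quaternion, forcing $w^2$ to be the unique involution of $\langle t \rangle$ and hence of ${\rm Syl}_G^\circ$ (case (3)). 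Consistency across all such $t$ singles out exactly one of these alternatives.
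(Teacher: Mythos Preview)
The paper does not prove this statement: it is recorded as Fact~\ref{th:deloro-jaligot-sylow} with a bare citation to \cite[Proposition~27]{Deloro-Jaligot}, so there is no ``paper's own proof'' to compare against. What follows is therefore just an assessment of your sketch on its own merits.

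Your first and third steps are fine. The identification ${\rm Syl}_G^\circ\cong\mathbb{Z}_{2^\infty}$ and the $\{\pm 1\}$ analysis of the induced action are correct, and once $C_{{\rm Syl}_G}({\rm Syl}_G^\circ)={\rm Syl}_G^\circ$ is known, the finite $2$-group classification you run (dihedral versus generalised quaternion via Fact~\ref{gorenstein-ex}) cleanly yields cases (2) and (3).

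The middle step, however, has a genuine gap. Set $T={\rm Syl}_G^\circ$. If $j\in{\rm Syl}_G\setminus T$ centralises $T$, then $\langle j,T\rangle$ is indeed abelian, but its definable closure does \emph{not} give a larger $2$-torus. Since $[\langle j,T\rangle:T]$ is finite, Fact~\ref{fact:def_closure}(6) forces $\overline{\langle j,T\rangle}^{\,\circ}=\overline{T}$; the $2$-torsion of the decent torus $\overline{T}$ is exactly $T$ again, so $j$ simply lands in the finite quotient $\overline{\langle j,T\rangle}/\overline{T}$ and no contradiction with ${\rm pr}_2(G)=1$ materialises. In other words, your ``should produce a $2$-torus properly extending ${\rm Syl}_G^\circ$'' is precisely the assertion that fails.

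What the Deloro--Jaligot argument actually needs at this point is a genuine use of the connectedness of $C_G(T)$ (Theorem~\ref{th:good_torus_centraliser}) \emph{together with} a passage to the quotient $C_G(T)/\overline{T}$: one shows that this connected quotient has no $2$-torsion (no $2$-torus by maximality of $T$ and a lifting argument, no $2$-unipotent part since $G$ is of odd type, hence degenerate type, hence no involutions by \cite{Borovik-Burdges-Cherlin}). Torsion lifting (Fact~\ref{fact:torsion_lifting}) then forces every $2$-element of $C_G(T)$ into $\overline{T}$, i.e.\ into $T$, giving $C_{{\rm Syl}_G}(T)=T$. You gestured at the right ingredients (Theorem~\ref{th:good_torus_centraliser} and maximality) but the mechanism linking them is the quotient analysis, not a closure-of-$\langle j,T\rangle$ computation.
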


\begin{theorem}[{\cite[Theorem 9.29]{Borovik-Nesin}}]\label{th:connected-solv-Sylow}Let $G$ be a connected solvable group of finite Morley rank. Then the Sylow $2$-subgroups are connected.\end{theorem}

\begin{fact}[{\cite[Lemma 10.3]{ABC}}] \label{fact:involutory_autom} Let $G$ be a connected group of finite Morley rank and $i$ be a definable involutory automorphism of $G$ with $C_G(i)$ finite. Then $G$ is abelian and $i$ inverts $G$. \end{fact}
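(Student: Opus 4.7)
The plan is to introduce the definable map $\phi\colon G\to G$ given by $\phi(g)=g^{-1}g^{i}$, where $g^{i}$ denotes $i(g)$. First I would compute its fibres: from $g^{-1}g^{i}=h^{-1}h^{i}$ one rearranges to $hg^{-1}=(hg^{-1})^{i}$, so the fibre of $\phi$ over $\phi(g)$ is the coset $C_{G}(i)\cdot g$, which is finite by hypothesis. Since $\phi$ is definable with finite fibres, its image $\phi(G)$ has the same Morley rank as $G$ and is therefore generic in $G$.

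Next I would observe that every $h=g^{-1}g^{i}$ in the image satisfies
\[
h^{i}=(g^{-1})^{i}\,g=(g^{i})^{-1}g=(g^{-1}g^{i})^{-1}=h^{-1}.
\]
Hence the definable set $A=\{x\in G : x^{i}=x^{-1}\}$ contains $\phi(G)$ and so is generic in $G$. This generic set of ``inverted'' elements is the engine of the whole argument.

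The heart of the proof is to show that $A\subseteq Z(G)$. Fix $a\in A$. The left translate $a^{-1}A$ is generic, and because the Morley rank of a finite union is the maximum of the ranks, the intersection of two generic definable subsets of $G$ is again generic; in particular $a^{-1}A\cap A$ is generic. For any $b$ in this intersection both $b$ and $ab$ lie in $A$, so
\[
a^{-1}b^{-1}=a^{i}b^{i}=(ab)^{i}=(ab)^{-1}=b^{-1}a^{-1},
\]
which forces $ab=ba$. Thus the definable subgroup $C_{G}(a)$ contains the generic set $a^{-1}A\cap A$, hence has full Morley rank in $G$; by the descending chain condition it has finite index in $G$, and connectedness of $G$ forces $C_{G}(a)=G$. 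Therefore $a\in Z(G)$, so $A\subseteq Z(G)$; as $A$ is generic and $Z(G)$ is a definable subgroup of $G$, we conclude $Z(G)=G$ and $G$ is abelian.

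To finish, once $G$ is abelian the set $A$ is in fact a definable subgroup, since $(ab)^{i}=a^{i}b^{i}=a^{-1}b^{-1}=(ab)^{-1}$ using commutativity. This definable subgroup is generic in the connected group $G$, hence equals $G$, and therefore $i$ inverts $G$. I do not anticipate a serious obstacle: the only non-obvious move is choosing the map $\phi$, after which the proof reduces to the rank-additivity formula for definable maps with finite fibres, the identity $(g^{-1}g^{i})^{i}=(g^{-1}g^{i})^{-1}$, and the standard principle that a definable subgroup of full Morley rank in a connected group of finite Morley rank is the whole group.
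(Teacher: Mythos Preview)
Your proof is correct and is essentially the standard argument for this result. Note, however, that the paper does not supply its own proof of this statement: it is recorded as a Fact with a citation to \cite[Lemma~10.3]{ABC}, so there is no in-paper argument to compare against. The proof you have written is precisely the one found in the cited reference (and in \cite[Ex.~13 and~14, p.~79]{Borovik-Nesin}): the map $g\mapsto g^{-1}g^{i}$ with finite fibres, genericity of the set of inverted elements, and the translate-intersection trick to force centrality.
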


\begin{theorem}[Deloro \cite{Deloro2009}]\label{th:centraliser_of_involution} Let $G$ be a connected odd type group of finite Morley rank and $i \in G$ be an involution. Then $C_G(i)/ C^\circ_G(i)$ has exponent dividing $2$.
\end{theorem}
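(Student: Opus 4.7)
The plan is to reduce, via a Frattini-type argument, to controlling the squaring map on the normalizer of a maximal $2$-torus of $C^\circ_G(i)$, and then to exploit the connectedness of centralizers of decent tori.

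Let $x \in C_G(i)$; I want to show $x^2 \in C^\circ_G(i)$. Since $G$ is of odd type, the connected subgroup $C^\circ_G(i)$ contains a maximal $2$-torus $T$, and $T$ is a decent torus (it consists entirely of $2$-torsion and is divisible, so it equals the definable closure of its torsion). Because $x$ normalizes $C^\circ_G(i)$ by Fact~\ref{fact:def_closure-non-def}, the conjugate $T^x$ is another maximal $2$-torus of $C^\circ_G(i)$. Applying Theorem~\ref{th:Sylow-2-conjugate} inside $C^\circ_G(i)$ and passing to connected components yields $c \in C^\circ_G(i)$ with $T^{xc^{-1}} = T$. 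Setting $n := xc^{-1} \in N_{C_G(i)}(T)$ and using $n^{-1}cn \in C^\circ_G(i)$, I compute
\[
x^2 = (nc)^2 = n^2 \cdot (n^{-1}cn) \cdot c \in n^2 \cdot C^\circ_G(i),
\]
so the problem reduces to showing $n^2 \in C^\circ_G(i)$ for each $n \in N_{C_G(i)}(T)$.

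Next, I would analyze the natural map $\rho\colon N_{C_G(i)}(T) \to \mathrm{Aut}(T)$, whose kernel is $C_{C_G(i)}(T) = C_G(T) \cap C_G(i)$. By Theorem~\ref{th:good_torus_centraliser}, $C_G(T)$ is connected. After arranging that $i \in T$ (or replacing $T$ by the connected component of $\langle T, i\rangle$ and redoing the conjugacy reduction inside a slightly larger abelian $2$-group; note $\langle T, i \rangle$ is abelian since $i$ centralizes $T$), one obtains $C_G(T) \subseteq C_G(i)$, and connectedness of $C_G(T)$ gives $C_{C_G(i)}(T) \subseteq C^\circ_G(i)$. Thus the task becomes to show that $\rho(n)^2 = 1$, i.e.\ that the finite Weyl-like quotient $N_{C_G(i)}(T)/C_{C_G(i)}(T)$ has exponent dividing $2$.

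The main obstacle is precisely this last step. In general, Weyl groups of decent tori in groups of finite Morley rank can have arbitrary exponent, so the restriction to exponent $2$ must come from the combination of (i) being inside $C_G(i)$ and (ii) the odd-type structure. The expected argument uses that the central involution $i$ and the structure of the Sylow $2$-subgroup of $C_G(i)$ (in the spirit of Fact~\ref{th:deloro-jaligot-sylow}, generalised beyond Pr\"ufer $2$-rank one) force every element of $N_{C_G(i)}(T)/C_{C_G(i)}(T)$ to be representable by an involution; once this is established, Fact~\ref{fact:involutory_autom} or a direct analysis of involutory automorphisms of the $2$-torus $T$ yields the conclusion. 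This Weyl-group-exponent analysis is the technical heart of the proof and the place where the odd-type hypothesis genuinely intervenes.
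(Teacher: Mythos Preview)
The paper does not prove this statement: Theorem~\ref{th:centraliser_of_involution} is quoted as a background result from \cite{Deloro2009} and used as a black box (in Lemma~\ref{lemma:centraliser-of-i}). There is therefore no proof in the paper to compare your proposal against.

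On the proposal itself: the Frattini reduction to $N_{C_G(i)}(T)$ is sound and is indeed the skeleton of Deloro's argument, but you explicitly stop at the substantive step---showing the Weyl-type quotient $N_{C_G(i)}(T)/C_{C_G(i)}(T)$ has exponent dividing~$2$---and only gesture at it. Two concrete issues. First, your device for ``arranging $i\in T$'' does not work: the connected component of $\langle T,i\rangle$ is just $T$ again (it has index at most~$2$), so the replacement is vacuous. What is actually needed here is the torality of involutions in connected odd-type groups (every involution lies in a $2$-torus), which is an independent input you have not invoked. Second, the suggestion that Fact~\ref{fact:involutory_autom} closes the final step is misplaced: that fact requires the centraliser of the involutory automorphism to be finite, and there is no reason for that to hold inside $N_{C_G(i)}(T)$. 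Deloro's actual argument produces, for each coset of $C_{C_G(i)}(T)$, an involutive representative by combining torality with a torsion-lifting analysis in the Sylow $2$-subgroup of $C_G(i)$; this is precisely the content you have identified as missing and left undone.
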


\begin{theorem}[{Burdges and Cherlin \cite[Theorem $3^\ast$]{Burdges-Cherlin2009}}]\label{fact:centraliser-Sylow-2}Let $G$ be a connected odd type group of finite Morley rank and $i\in G$ be an involution. Then $i$ belongs to a $2$-torus of $G$.\end{theorem}

\begin{theorem}[Alt\i nel and Burdges {\cite[Theorem 1]{Altinel-Burdges}}]\label{th:good_torus_centraliser} If $T$ is a $2$-torus of a connected group $G$ of finite Morley rank then $C_G(T)$ is connected. \end{theorem}

\section{Results on pseudofinite groups}\label{Sec:pseudofinite}In this section we prove those results on pseudofinite groups which are needed in the proofs of Theorem~\ref{th:main} and Theorem~\ref{main-stringer}.

\subsection{A structural result on certain pseudofinite groups}\label{sec:socle} Let $G$ be a group and $k$ be a positive integer. We say that $G$ is of \emph{centraliser dimension} $k$ if the longest proper descending chain of centralisers in $G$ has length $k$. If such an integer $k$ exists, then $G$ is called a group of \emph{finite centraliser dimension}.

The \emph{socle} of a group $G$, denoted by ${\rm Soc}(G)$, is the subgroup generated by all minimal normal (not necessarily proper) subgroups of $G$.

A subgroup $H$ of a group $G$ is \emph{subnormal} if there is a finite ascending chain of subgroups starting from $H$ and ending at $G$, so that each is a normal subgroup of its successor. This is denoted by $H \unlhd\unlhd G$.

Using similar arguments as in \cite{Ugurlu2013} and \cite{Karhumaki2021}, we obtain the following result.

\begin{propo}\label{prop:structure-pf} Let $G\equiv \prod_{i\in I} G_i/\mathcal{U}$ be a pseudofinite group of finite centraliser dimension. If for any non-trivial $H  \unlhd\unlhd  G$ we have $C_G(H)=1$, then the following hold. \begin{enumerate}[(a)]
\item \label{socle} For almost all $i$, ${\rm Soc}(G_i)$ is non-abelian simple and $C_{G_i}({\rm Soc}(G_i)) = 1$.
\item \label{socleU}$\prod_{i\in I}{\rm Soc}(G_i)/ \mathcal{U}$ is an infinite definable normal subgroup of $\prod_{i\in I}{G_i}/ \mathcal{U}$ which is isomorphic to a simple (twisted) Chevalley group $X(F)$ where $F$ is a pseudofinite field.
\item \label{socleG} ${\rm Soc}(G) \equiv \prod_{i\in I}{\rm Soc}(G_i)/ \mathcal{U}$ and hence ${\rm Soc}(G) \cong X(F)$.
\item \label{abelian-by-finite} $G/{\rm Soc}(G)$ is abelian-by-finite and $G \leqslant {\rm Soc}(G){\rm Diag}(X)\rtimes {\rm Graph}(X)A$, where $A \leqslant {\rm Aut}(F)$ is abelian-by-finite. If $X={\rm PSL}_2$, then $A$ is abelian.
\end{enumerate}
\end{propo}
\begin{proof} (\ref{socle}) (cf. \cite{Ugurlu2013}.) For any group the following property \begin{center} ``The centraliser of any non-trivial normal subgroup is trivial"
\end{center} can be expressed by the first-order sentence
$$\forall x \forall y [(x \neq 1 \wedge y \neq 1)\to \exists z [y, x^z] \neq 1].$$
By assumption, the sentence above holds in $G$, so using \L o\'{s}'s Theorem, it holds in $G_i$ for almost all $i$. It immediately follows that $C_{G_i}({\rm Soc}(G_i)) = 1$ and ${\rm Soc}(G_i)$ is non-abelian. The simplicity of ${\rm Soc}(G_i)$ follows exactly as in \cite{Ugurlu2013}. We will briefly sketch this argument here. The finite group $G_i$ has at least one minimal non-trivial normal subgroup. By the first-order property above, it has a unique such, which is the socle. Such subgroup is clearly characteristically simple. To see that it is actually simple, we need to use two assumptions on $G$ (finite centraliser dimension and triviality of the centralisers of subnormal subgroups). Since this part is a bit technical and already proven before, we simply refer the reader to \cite[Lemma 3.10 and Lemma 3.11]{Ugurlu2013}.

(\ref{socleU}) (cf. \cite{Ugurlu2013})
Clearly, $\prod_{i\in I}{\rm Soc}(G_i)/ \mathcal{U}$ is a  normal subgroup of $\prod_{i\in I}{G_i}/ \mathcal{U}$. The following holds for almost all $i$: If there was a bound on the sizes $|{\rm Soc}(G_i)|$ then there would be a bound on the sizes $|G_i|$ as $G_i \leqslant {\rm Aut}({\rm Soc}(G_i))$, contradicting the fact that $G$ is infinite. As the finite centraliser dimension property forbids ${\rm Soc}(G_i)$'s to be bigger and bigger alternating groups, by CFSG, ${\rm Soc}(G_i) \cong X(\mathbb{F}_{i})$ where $X(\mathbb{F}_{i})$ is a simple (twisted) Chevalley group with $|\mathbb{F}_{i}|>8$. Also, the groups ${\rm Soc}(G_i)$ are of the same Lie rank by the finite centraliser dimension; hence they are all of the same Lie type. As a result, $\prod_{i\in I}{\rm Soc}(G_i)/ \mathcal{U}\cong X(F)$ for some pseudofinite field $F$.  For definability, we refer the reader again to \cite{Ugurlu2013}. Here is a brief argument. Since ${\rm Soc}(G_i) \cong X(\mathbb{F}_{i})$ where  $|\mathbb{F}_{i}|>8$, by the result of Ellers and Gordeev in \cite{Ellers-Gordeev1998}, there is $x_{i} \in {\rm Soc}(G_i)$ so that $${\rm Soc}(G_i)=x_{i}^{{\rm Soc}(G_i)}x_{i}^{{\rm Soc}(G_i)}.$$ This allows us to conclude that $\prod_{i\in I}{\rm Soc}(G_i)/ \mathcal{U}$ is definable in $\prod_{i\in I}{G_i}/ \mathcal{U}$ (for detail, see \cite[Lemma 3.12]{Ugurlu2013}).

(\ref{socleG}) By (b) there is $S \leqslant G$ so that $S\equiv \prod_{i\in I}{\rm Soc}(G_i)/ \mathcal{U}$; hence $S\cong X(F)$ where $F$ is a pseudofinite field. Being simple, $S$ is a minimal normal subgroup of $G$ and, by assumption, $C_G(S)=1$. So $S$ is the unique minimal normal subgroup of $G$ and $S={\rm Soc}(G)$.

(\ref{abelian-by-finite})  By
Theorem~\ref{th:autom-finitegroups}, we have
$$ G_i \leqslant {\rm Aut}({\rm Soc}(G_{i})) \cong
\left(X(\mathbb{F}_{i}) \cdot {\rm Diag}(X) \right) \rtimes\left( {\rm
Grp}(X)\cdot {\rm Aut}(\mathbb{F}_{i}) \right),$$ where the groups
${\rm Diag}(X),{\rm Grp}(X)$ are finite and do not depend on $i$. Let us set:
$$C:= \prod_{i\in I}{\rm Aut}(\mathbb{F}_{i})/\mathcal{U},\
\mathbb{F}:= \prod_{i\in I}\mathbb{F}_{i}/\mathcal{U},\
H:=\prod_{i\in I}G_{i}/\mathcal{U},\ S_H:=X(\mathbb{F}).$$
It is now clear that
$$G \equiv H \leqslant \prod_{i\in I} {\rm
Aut}(X(\mathbb{F}_{i}))/\mathcal{U}\equiv \left( S_H \cdot {\rm
Diag}(X) \right) \rtimes\left( {\rm Grp}(X))\cdot C \right),$$ where $C$ is abelian. Now
$$G/{\rm Soc}(G) \equiv H/S_H  \leqslant {\rm Diag}(X) \rtimes\left( {\rm
Grp}(X)\cdot C \right)$$
and the group $H/S_H$ is abelian-by-$d$, where $d$ depends on the sizes $|{\rm Diag}(X)|$ and $|{\rm Grp}(X)|$. Since this is a first-order property, $G/{\rm Soc}(G)$ is abelian-by-$d$ as well. On the other hand, again by Theorem~\ref{th:autom-finitegroups}, we have $$G  \leqslant {\rm Aut}({\rm Soc}(G)) \cong
\left({\rm Soc}(G) \cdot {\rm Diag}(X) \right) \rtimes\left( {\rm
Grp}(X)\cdot {\rm Aut}(F) \right).$$ Consider the following projection map $$\Pi: {\rm Soc}(G) \rtimes {\rm Aut}(F) \longmapsto  {\rm Aut}(F)$$ and set $A:= \Pi(G)$. Since ${\rm Soc}(G) \leqslant {\rm Ker}(\Pi)$, by above, $A$ is abelian-by-finite. If $X= {\rm PSL}_2$, then $$G/{\rm Soc}(G) \equiv H/S_H  \leqslant {\rm Diag}({\rm PSL}_2) \rtimes C. $$ Since ${\rm Diag}({\rm PSL}_2) \cong C_2$, the semidirect product above is a direct product and hence $H/S_H$ is abelian; and so is $A$.  
\end{proof}


\subsection{Proof of Proposition~\ref{propo:S}} \label{subsec:S} In this section, we shall prove Proposition~\ref{propo:S}. For the proof, we will need the following results from finite group theory.

\begin{theorem}\label{th:finite-groups}Let $H$ be a finite group. Then the following hold. \begin{enumerate}[(1)]
\item \emph{(\cite[Theorem 12.7]{Passman1968}).} If $H$ has a cyclic Sylow $2$-subgroup then $H$ is not simple.
\item \emph{(\cite{Brauers}).} If $H$ has a generalised quaternion Sylow $2$-subgroup then $H$ is not simple.
\item \emph{(\cite{Gorenstein-Walter}).} If $H$ is simple and has a dihedral Sylow $2$-subgroup, then $H$ is isomorphic either to ${\rm PSL}_2(\mathbb{F}_q)$, for $q \geqslant 5$ odd, or to the alternating group $A_7$.
\end{enumerate}\end{theorem}

We include the proofs of the following two facts  here since we could not find exact references.

\begin{fact}\label{lemma:gen-qua-sylow} Let $H$ be a finite group in which every subgroup of order $4$ is a cyclic group. Then $H$ is not simple.\end{fact}
\begin{proof}Towards a contradiction, assume that $H$ is simple. Let $P$ be a Sylow $2$-subgroup of $H$. Then $P$ contains a unique involution, so it is cyclic or generalised quaternion. By Theorem~\ref{th:finite-groups}, $H$ is not simple.\end{proof}

\begin{fact}\label{lemma-subgroups-cyclic-dih} Let $H$ be a finite $2$-group such that every subgroup of order $8$ in $H$ is either a cyclic or a dihedral group. Then $H$ is either a cyclic or a dihedral group.
\end{fact}

\begin{proof} Let $|H|=2^m$. We may assume that $m >3$. We first observe that $H$ contains no normal Klein $4$-group: Assume contrary. Then $E \unlhd H$ is a Klein $4$-group.  Since ${\rm Aut}(E) = {\rm GL}_2(\mathbb{F}_2) \cong {\rm Sym}_3$ and $H/C_H(E)$ embeds in ${\rm Aut}(E)$ we have $[H: C_{H}(E)] \leqslant 2$. Therefore, $|C_{H}(E)| \geqslant 8$. If $C_{H}(E)$ contains an element $h$ of order $4$, then $E\langle h\rangle$ is a non-cyclic abelian subgroup of order $16$ or $8$. But $E\langle h\rangle$ cannot be of order $8$ by assumption and, if $E\langle h\rangle$ is of order $16$, then it is forced to be isomorphic to the group $\mathbb{Z}_4 \times \mathbb{Z}_2 \times \mathbb{Z}_2$ which contains a non-cyclic abelian subgroup of order $8$, contradictory to assumption. Hence, $C_{H}(E)$ is an elementary abelian $2$-subgroup of order $\geqslant 8$. This contradiction proves that $H$ contains no normal Klein $4$-group. So $H$ is one of the following groups (\cite[Ex. 9, Chapter 5]{gorenstein1980}): a cyclic, a dihedral, a semidihedral, or a generalised quaternion group. As the latter two contain a quaternion subgroup $Q_8$ (\cite[Theorem 4.3]{gorenstein1980}), the claim holds. \end{proof}

\begin{custompr}{1.4} Let $G$ be a connected group of finite Morley rank of odd type with ${\rm pr}_2(G)=1$ and $H$ be a simple pseudofinite subgroup of $G$. Then $H\cong {\rm PSL}_2(F)$, where $F$ is a pseudofinite field of characteristic $\neq 2$.
\end{custompr}

\begin{proof}By Theorem~\ref{th:wilson}, $H \cong \prod_{i\in I} (X(\mathbb{F}_{i}))_i / \mathcal{U}$, where $\mathcal{U}$ is a non-principal ultrafilter on $I$ and, for almost all $i$, $X(\mathbb{F}_{i})$ is a finite simple (twisted) Chevalley group.  Let $P_H \leqslant H$ be any $2$-subgroup and let $P$ be a Sylow 2-subgroup of $G$ such that  $P_H \leqslant P \leqslant G$. Note that $G$ has conjugate Sylow $2$-subgroups and their structure is given  in Theorem~\ref{th:deloro-jaligot-sylow}. If $P$ has type (1) or (3) from Theorem~\ref{th:deloro-jaligot-sylow}, then $G$, and therefore $H$, contains no Klein $4$-group. This is a first-order property, which therefore holds in almost all of the $X(\mathbb{F}_{i})$'s, contradicting Fact~\ref{lemma:gen-qua-sylow}. It follows that $P$ has type (2) from Theorem~\ref{th:deloro-jaligot-sylow}. Thus every subgroup of order $8$ of $G$, and of $H$, is cyclic or dihedral. This is another first-order property, which holds in almost all of the $X(\mathbb{F}_{i})$'s. Thus, by Fact~\ref{lemma-subgroups-cyclic-dih}, Sylow $2$-subgroups of $X(\mathbb{F}_{i})$ are cyclic or dihedral; by Fact~\ref{lemma:gen-qua-sylow} they are dihedral (note that if the Sylow $2$-subgroups of $H$ are of order $4$ then they must be dihedral). Now, as $H$ is infinite, Theorem~\ref{th:finite-groups} gives the result.\end{proof}
 
\section{Supertight automorphisms}\label{sec:tight_autom}
Recall that the definable closure of any subset $X$ in a group of finite Morley rank $G$ is denoted by $\overline{X}$.

\begin{definition}\label{def:tight} An automorphism $\alpha$ of an infinite connected group of finite Morley rank $G$ is called \emph{tight} if, for any connected definable and set-wise $\alpha$-invariant subgroup $H$ of $G$, we have $\overline{C_{H}(\alpha)}=H.$ Further, $\alpha$ is called \emph{supertight} if both of the following hold. \begin{enumerate}
\item For each $n\in \mathbb{N} \setminus \{0\}$ the power $\alpha^n$ is a tight automorphism of $G$.
\item \label{fixed-points} For any $m,n\in \mathbb{N} \setminus \{0\}$, if $m|n$, then $C_G(\alpha^m)<  C_G(\alpha^n)$.
\end{enumerate} 
\end{definition}

The notion of a (super)tight automorphism is defined so that one mimics the situation in which $G$ is a simple Chevalley group over an algebraically closed field $K$ and $\alpha$ is a generic automorphism of $K$: If $G$ is a simple Chevalley group, recognised as the group of $K$-rational points of an algebraically closed field $K =\prod_{p_i \in P} \mathbb{F}^{alg}_{p_i}/\mathcal{U}$, then the \emph{non-standard Frobenius automorphism} $\phi_{\mathcal{U}}$ of $K$ induces on $G$ a supertight automorphism. Here, $P$ is the set of all prime numbers, $\mathcal{U}$ is a non-principal ultrafilter on $P$ and $\phi_{\mathcal{U}}$ is the map from $K$ to $K$ sending an element $[x_i]_{\mathcal{U}}$ to the element $[x_i^{p_i}]_{\mathcal{U}}$. In this situation, for all $n\in \mathbb{N}\setminus \{0\}$, the fixed-point subgroup $C_G(\phi_{\mathcal{U}}^n)$, which is equal to $G({\rm Fix}_{\phi_{\mathcal{U}}^n}(K))$, is clearly a pseudofinite group and, if $m|n$, then $C_G(\phi_{\mathcal{U}}^m) < C_G(\phi_{\mathcal{U}}^n)$. Further, the pair $(K,\phi_{\mathcal{U}}^n)$ is a model of ${\rm ACFA}$ \cite{Hrushovski2004}. Due to this example, we expect that the following question has a positive answer.

\begin{question} Is  a tight automorphism of an infinite simple group of finite Morley rank supertight?
\end{question}

We will use the following results in the proof of Theorem~\ref{th:main}. 

\begin{theorem}[U\u{g}urlu {\cite[Theorem 3.1]{Ugurlu2013}}]\label{th:ugurlu} Let $G$ be an infinite simple group of finite Morley rank and $\alpha$ be a tight automorphism of $G$. Assume that the fixed-point subgroup $C_G(\alpha)=P \equiv \prod_{i\in I} P_i/\mathcal{U}$ is pseudofinite. Then $P$ has a definable normal subgroup  $S$ such that $S \equiv \prod_{i\in I }{\rm Soc}(P_i)/ \mathcal{U}$ is a simple pseudofinite group.\end{theorem}

\begin{fact}[U\u{g}urlu {\cite[Lemma 3.3]{Ugurlu2013}}]\label{fact:ugurlu}Let $G$ be an infinite simple group of finite Morley rank with a supertight automorphism $\alpha$. Assume that the fixed-point subgroup $C_G(\alpha^n)=P_n$ is pseudofinite for all $n\in \mathbb{N}\setminus \{0\}$. Then for any non-trivial $ H_n \unlhd\unlhd P_n$, we have  $\overline{H_n}=G$ and $C_{P_n}(H_n)=1$.\end{fact}

Let $G$ be an infinite simple group of finite Morley rank with a tight automorphism $\alpha$ whose fixed-point subgroup is pseudofinite. We finish this section by noting that $G$ cannot be of degenerate type. Assume contrary. Then $G$ has no involutions and neither does the simple pseudofinite group $ S  \leqslant G$ (Theorem~\ref{th:ugurlu}).  We have $S \cong \prod_{i\in I} (X(\mathbb{F}_i))_i/\mathcal{U}$ where, for almost all $i$, $X(\mathbb{F}_i)$ is a finite (twisted) Chevalley group (Theorem~\ref{th:wilson}).  Since having no involutions is a first-order property, $X(\mathbb{F}_i)$ has no involutions contradictory to the famous Feit-Thompson odd order theorem.

\section{Proof of the main theorem}\label{Sec:proofs} In this section we prove Theorem~\ref{th:main} and Theorem~\ref{main-stringer}:

\begin{customthm}{1.2}Let $G$ be an infinite simple group of finite Morley rank with ${\rm pr}_2(G)=1$  admitting a supertight automorphism $\alpha$. Assume that the fixed-point subgroup $C_G(\alpha^n)$ is pseudofinite for all $n\in \mathbb{N}\setminus \{0\}$. Then $G \cong {\rm PGL}_2(K)$, where $K$ is an algebraically closed field of characteristic $\neq 2$.
\end{customthm}

\begin{customthm}{1.3}Let $G$ be a connected group of finite Morley rank of odd type with a supertight automorphism $\alpha$. Assume that the following holds for all $n\in \mathbb{N}\setminus\{0\}$. \begin{enumerate}[(1)]
\item $P_n:=C_{G}(\alpha^n)$ is pseudofinite and for any non-trivial $ H_n\unlhd\unlhd P_n$, we have $C_{P_n}(H_n)=1$.
\item ${\rm Soc}(P_n)\cong {\rm PSL}_2(F_n)$ where $F_n$ is a pseudofinite field of characteristic $\neq 2$. 
\item $\overline{{\rm Soc}(P_n)}=G$.
\end{enumerate} Then $G \cong {\rm PGL}_2(K)$, where $K$ is an algebraically closed field of characteristic $\neq 2$.\end{customthm}

We first observe that Theorem~\ref{th:main} follows from Theorem~\ref{main-stringer}: \begin{proof}[Proof of Theorem~\ref{th:main} if Theorem~\ref{main-stringer} holds.] Let $G$ be as in Theorem~\ref{th:main}. Then $G$ is connected and of odd type. Set $P_n:=C_{G}(\alpha^n)$. Assumptions (1) and (3) of Theorem~\ref{main-stringer} hold by Fact~\ref{fact:ugurlu}. Also, as $P_n \leqslant G$  is of finite centraliser dimension, Proposition~\ref{prop:structure-pf} and Proposition~\ref{propo:S} imply that (2) of Theorem~\ref{main-stringer} holds. So, Theorem~\ref{main-stringer} implies that $G \cong {\rm PGL}_2(K)$, where $K$ is an algebraically closed field of characteristic $\neq 2$. \end{proof}

In what follows, we shall prove Theorem~\ref{main-stringer} step-by-step. First, we fix the set-up and notation for the rest of the paper and explain the overall strategy of our proof.

\subsection{Notation, set-up and overall strategy}\label{notation}Let $F$ be a pseudofinite field of characteristic $\neq 2$. We now fix notation on classical subgroups of ${\rm PGL}_2(F)$ and collect some well-known facts about them. Firstly, bear in mind the following:
$$\mathrm{Aut}(\mathrm{PSL}_2(F))\cong
\mathrm{Aut}(\mathrm{PGL}_2(F))\cong \mathrm{PGL}_2(F)\rtimes
\mathrm{Aut}(F).$$ Let $U_2(F), D_2(F) \leqslant {\rm GL}_2(F)$ denote the groups of upper unitriangular matrices and of diagonal matrices, respectively. Then $U_2(F)\cong F^+$, $D_2(F)\cong F^{\times}\times F^{\times}$ and $D_2(F)\cap \mathrm{SL}_2(F)\cong F^{\times},$ where $F^+$
and $F^{\times}$ denote the additive and multiplicative groups of $F$,
respectively. Notice that $[F^\times:(F^\times)^2]=2$ as $F$ is a pseudofinite field of characteristic $\neq 2$. Let $\pi:\mathrm{GL}_2(F)\to \mathrm{PGL}_2(F).$

\begin{enumerate}[(a)]
\item \label{PGL2-unipotent} We denote by $U$ (a \emph{unipotent subgroup} of ${\rm
PGL}_2(F)$) the image of $U_2(F)$ by
$\pi$ in ${\rm PGL}_2(F)$. Clearly $U\cong F^+$.

\item \label{PGL2-torus} We denote by $T $ (a \emph{maximal
split torus} of ${\rm PGL}_2(F)$) the image of
$D_2(F)$ by $\pi$ in ${\rm PGL}_2(F)$. Clearly $T\cong F^\times$. The unique involution of $T$ is denoted by $i$ (note that $\text{char}(F)\neq 2$).

\item \label{PGL2-borel} A \emph{Borel subgroup} $B=U\rtimes T$ of ${\rm PGL}_2(F)$ is the image by $\pi$ in ${\rm PGL}_2(F)$ of the group of upper-triangular matrices.

\item \label{PGL2-NU-CU}We have $N_{{\rm
PGL}_2(F)}(B)=N_{{\rm PGL}_2(F)}(U)=B$, with $i$
acting on $U$ by inversion, and $C_{{\rm PGL}_2(F)}(u)=U$ for all $u \in U \setminus \{0\}$.

\item \label{PGL2-minimalU}The unipotent group $U$ has no proper non-trivial $T$-normal subgroups.

\item \label{PGL2-Weyl} The \emph{Weyl involution} $w$ is defined as
the image by $\pi$ in ${\rm PGL}_2(F)$ of the matrix ${\begin{pmatrix}
0 & -1 \\ 1 & 0 \end{pmatrix}}$. The Weyl involution $w$ inverts
$T$ and $ N_{{\rm PGL}_2(F)}(T)/T=
\langle wT \rangle.$

\item \label{PGL2-NT-CT-Ci}We have $C_{{\rm PGL}_2(F)}(i)=N_{{\rm
PGL}_2(F)}(T)=\langle w, T\rangle$ and $C_{{\rm
PGL}_2(F)}(t)=T$ for all $t \in T$ so that $t^2\neq
1$. Also $T\cap T^{g}=1$ for all $g\in {\rm
PGL}_2(F)\setminus N_{{\rm PGL}_2(F)}(T)$.

\item \label{PGL2-disjoint-union}We have
${\rm PGL}_2(F)=B\sqcup U
w B$, where $\sqcup$ stands for the disjoint union.\end{enumerate}

A $2$-transitive permutation group in which a stabiliser of any three distinct points is the identity is called a \emph{Zassenhaus group}. A Zassenhaus group is said to be \emph{split} if a one-point stabiliser $B$ is equal to $U \rtimes T$ with $T < B$ and $T\cap T^b=1$ for all $b \in B\setminus T$. We shall identify the group $G$ as in Theorem~\ref{main-stringer} by invoking the following classical result.

\begin{theorem}[Delahan and Nesin \cite{DelahanNesin}]\label{th:DN} Let $G$ be an infinite split Zassenhaus group of finite Morley rank. If the stabiliser of two distinct points contains an involution, then \mbox{$G \cong {\rm PGL}_2(K)$} for some algebraically closed field $K$ of characteristic $\neq 2$. \end{theorem}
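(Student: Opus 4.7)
The plan is to coordinatize the action of $G$ as the natural action of $\mathrm{PGL}_2(K)$ on the projective line $\mathbb{P}^1(K)$, for an appropriate algebraically closed field $K$ built out of the Zassenhaus structure. I set up notation as in Fact~\ref{fact:structure-of-Zassenhaus}: $B = U \rtimes T$ is the stabiliser of a point $x$, $T$ is the stabiliser of $x$ and a second point $y$, $w$ is an element swapping $x$ and $y$ with $w^{2} \in T$, and by hypothesis $T$ contains an involution $i$. The decomposition $G = B \sqcup UwB$ together with the unique factorisation $g = uwvt$ for $g \notin B$ will be the main structural tool.

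First, I would show that $U$ is a connected abelian group on which $i$ acts by inversion. Suppose $1 \neq u \in C_U(i)$; then $i \in C_T(u) = 1$ by Fact~\ref{fact:structure-of-Zassenhaus}(7), a contradiction, so $C_U(i) = 1$. In the intended setting $T$ is infinite (this is forced by the existence of an infinite sharply $3$-transitive action of finite Morley rank), so $U$ is connected by Fact~\ref{Frobenius_splits}(2). Fact~\ref{fact:involutory_autom} applied to the involutory automorphism $i$ of $U$ then yields that $U$ is abelian and that $i$ acts on $U$ by inversion.

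Next I coordinatize: identify the permutation domain with $U \cup \{\infty\}$, where $\infty$ is the $B$-fixed point and $0 \in U$ is the other fixed point of $T$, so that $U$ acts on $U \cup \{\infty\}$ by translation (fixing $\infty$), $T$ acts on $U \setminus \{0\}$ regularly (by sharp $3$-transitivity of $G$, which follows from $2$-transitivity together with the trivial $3$-point stabiliser), and $w$ swaps $0$ and $\infty$. Fix a base point $1 \in U \setminus \{0\}$, and for $a \in U \setminus \{0\}$ let $\tau_a \in T$ be the unique element with $1^{\tau_a} = a$; define $a \cdot b := b^{\tau_a}$ and $0 \cdot b := 0$. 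Left distributivity of $\cdot$ over $+$ is automatic because $T$ acts on $U$ by automorphisms, and associativity on $U \setminus \{0\}$ is built in. This endows $K := U \cup \{0\}$ with the structure of a \emph{near-field}. The crucial and most delicate step, and in my view the main obstacle, is to upgrade this near-field to a genuine field by proving right distributivity $(a+b)\cdot c = a\cdot c + b\cdot c$ and commutativity of multiplication. Here the involution $w$ is indispensable: computing the effect of $w$ on an element outside $B$ via the factorisation $g = u'wv't$ produces a map $\iota \colon U \setminus \{0\} \to U \setminus \{0\}$ playing the role of $u \mapsto -u^{-1}$, and the compatibility of $\iota$ with addition, which must be teased out from sharp $3$-transitivity and from the presence of $i$ inverting $U$, forces $K$ to satisfy the remaining field axioms.

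Finally, $K$ is an infinite field interpretable in $G$, hence of finite Morley rank, so it is algebraically closed by the classical theorem of Macintyre. The characteristic of $K$ is not $2$ because $i$ acts as $-1$ on the additive group $U$ non-trivially, so $-1 \neq 1$ in $K$. Under the coordinatisation, $U$ corresponds to the unipotent radical of a Borel of $\mathrm{PGL}_2(K)$, $T$ corresponds to the image of the diagonal torus, and $w$ corresponds to the standard Weyl involution, all acting on $\mathbb{P}^1(K)$ in the expected way. The sharp $3$-transitivity of both actions on $\mathbb{P}^1(K)$, together with matching of the actions of the generating subgroups $U$, $T$, $\langle w \rangle$, pins down the isomorphism $G \cong \mathrm{PGL}_2(K)$, completing the identification.
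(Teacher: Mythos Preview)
The paper does not contain a proof of this statement. Theorem~\ref{th:DN} is quoted in Section~\ref{Subsec:Frobenius} as a background result of Delahan and Nesin, with a bare citation to \cite{DelahanNesin}, and is later invoked as a black box at the very end of Section~\ref{subsec:final-identification}. There is therefore nothing in the paper to compare your proposal against.

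That said, your sketch is the correct classical route and is essentially the argument in the original source (and in \cite[Chapter 11]{Borovik-Nesin}): build a near-domain from the sharply $3$-transitive action on $U\cup\{\infty\}$, use the involution $i\in T$ inverting $U$ together with the element $w$ to force right distributivity and commutativity of the multiplication, then apply Macintyre's theorem to conclude that the resulting interpretable field is algebraically closed. Two small points worth tightening: the line ``$K := U \cup \{0\}$'' should simply read $K := U$, since $0$ is already the identity of $U$; and your assertion that $T$ is infinite deserves an explicit argument (it follows because $T$ acts regularly on the infinite set $U\setminus\{0\}$, by sharp $3$-transitivity). The step you flag as the main obstacle---upgrading the near-field to a field---is indeed the heart of the Delahan--Nesin argument, and you have identified the right mechanism (the map induced by $w$ playing the role of $u\mapsto -u^{-1}$).
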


From now on, we use the following notation.
\begin{notation*}\begin{enumerate}[(i)]\item Let $G$ and $\alpha$ be as in Theorem~\ref{main-stringer}. We have $C_G(\alpha^n)=P_n$ and ${\rm Soc}(P_n) \cong {\rm PSL}_2(F_n)$.
\item In the course of the proof, we shall freely replace $\alpha$ by a power, and therefore, mostly, we omit subscript $n$. Thus, unless we need to specify a power of $\alpha$, we simply write $P$, ${\rm Soc}(P)$ and ${\rm PGL}_2(F)$ instead of $P_n$, ${\rm Soc}(P_n)$ and ${\rm PGL}_2(F_n)$.
\item We use the notation above for subgroups and elements of ${\rm PGL}_2(F)$: $U$, $T$ and $B$ stand for the unipotent subgroup, the maximal split torus and the Borel subgroup, $i$ is the unique involution of $T$ and $w$ is the Weyl involution inverting $T$. 
\item We shall start our proof by showing that ${\rm PGL}_2(F) \leqslant P$ (Corollary~\ref{corol:PGL2inP2}). Then, by Proposition~\ref{prop:structure-pf}, we have $P ={\rm PGL}_2(F) \rtimes A,$ where $A$ is an abelian subgroup of ${\rm Aut}(F)$. 
\end{enumerate}
\end{notation*}

As noted above, we start  by proving that ${\rm PGL}_2(F) \leqslant G$. This is an important step as it ensures that the involution $i\in T$ is an element of $G$. Then, as $ \overline{{\rm PGL}_2(F)}=G$, it suffices to prove that the definable closures in $G$ of subgroups of ${\rm PGL}_2(F)$ behave `as one would expect'. That is, we prove that $G$ is a split Zassenhaus group, acting on the set of left cosets of $\db$ in $G$, with a one-point stabiliser $\db$ and a two-point stabiliser $\dt$ by studying the definable closures in $G$ of subgroups of ${\rm PGL}_2(F)$. The three main steps of the proof are: \begin{enumerate}
\item ${\rm PGL}_2(F) \leqslant P$ (Section~\ref{subsec:PGL2-in_P}),
\item ${\rm PGL}_2(F)=P$ (Section~\ref{subsec:P/S-finite}), and
\item $\db \cap \du^g=1$ for all $g \in G\setminus \db$. (Section~\ref{subsec:normalisers}).
\end{enumerate} In Section~\ref{subsec:structures} we make some preliminary observations about the groups $\dt$ and $\du$ that are used in steps (2) and (3). After having (1)-(3), the identification of $G$ becomes standard; at this point we may use known techniques (\cite{Jaligot2000, CJ, DeloroJaligot2016}). To keep the text self-contained, we give this final identification in Section~\ref{subsec:final-identification}.

Throughout the rest of the paper, we repeatedly use facts (\ref{PGL2-unipotent})--(\ref{PGL2-disjoint-union}) collected above. The following remark will also be used repeatedly, without referring to it again.

\begin{remark}\label{lemma:def_closure_stabilised_by_alpha} Let $X \subseteq G$ be set-wise $\alpha$-invariant, then so is $\overline{X} \leqslant G$. This is obvious since $\alpha$ maps definable subgroups of $G$ to definable subgroups of $G$. Moreover, if $X$ is also finite then $X$ is  fixed point-wise by $\alpha^k$ for some $k\in \mathbb{N}\setminus\{0\}$.
\end{remark}

\subsection{Finding ${\rm PGL}_2(F)$ inside $P$}\label{subsec:PGL2-in_P} In this section we consider both $\alpha$ and $\alpha^2$ and hence we use subscripts for the subgroups of $P_2$; so ${\rm Soc}(P_2)\cong {\rm PSL}_2(F_2)$. We shall prove that ${\rm PGL}_2(F)\leqslant P$ by studying the fixed points of $\alpha|_{{\rm Soc}(P_2)}$.

\begin{fact}\label{propo:main}Let $K$ be a field of characteristic $\neq 2$ with $[K^\times:(K^\times)^2]=2$, $K\subseteq L$ be a field extension of degree $2$, and $\mathrm{Gal}(L/K)=\{1,f\}$. Then $C_{\mathrm{PSL}_2(L)}(f)\cong \mathrm{PGL}_2(K).$
\end{fact}
\begin{proof}
Let $\delta\in L$ be such that $\delta^2\in K$ and $L=K(\delta)$. Denote by $d$ the element of $\mathrm{PSL}_2(L)$ corresponding to the diagonal matrix $\mathrm{diag}(\delta,\delta^{-1})$. Then $d\in C_{\mathrm{PSL}_2(L)}(f)$ and by direct calculations we obtain:
\begin{enumerate}
  \item $[\langle\mathrm{PSL}_2(K),d\rangle :\mathrm{PSL}_2(K)]=2$;
  \item $C_{\mathrm{PSL}_2(L)}(f)= \langle\mathrm{PSL}_2(K),d\rangle$.
\end{enumerate}
It clearly follows that $C_{\mathrm{PSL}_2(L)}(f)\cong \mathrm{PGL}_2(K)$.
\end{proof}

\begin{lemma}\label{lemma:socle} For all $n\in \mathbb{N}\setminus \{0\}$, we have ${\rm Soc}(P) \leqslant {\rm Soc}(P_n)$.
\end{lemma}
\begin{proof}The simple socle ${\rm Soc}(P_n)$ is the minimal normal subgroup of $P_n$ for any $n\in \mathbb{N}\setminus \{0\}$. So, we only need to check that ${\rm Soc}(P) \cap {\rm Soc}(P_n)\neq 1$. If not, then $${\rm Soc}(P_n){\rm Soc}(P)/{\rm Soc}(P_n) = {\rm Soc}(P)/({\rm Soc}(P_n)\cap {\rm Soc}(P)) \cong  {\rm Soc}(P)\cong {\rm PSL}_2(F).$$ We have a contradiction as $P_n/{\rm Soc}(P_n)$ is abelian by Proposition~\ref{prop:structure-pf}. \end{proof}

\begin{propo}\label{propo:field-autom} There exists a field automorphism $f\in {\rm Aut}(F_2)$ of order $2$ such that $\alpha|_{{\rm Soc}(P_2)}$ is induced by $f$ and $F=\mathrm{Fix}_{F_2}(f)$.
\end{propo}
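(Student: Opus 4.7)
The strategy is to analyse $\alpha|_{S_2}$ inside ${\rm Aut}(S_2) \cong {\rm PGL}_2(F_2) \rtimes {\rm Aut}(F_2)$ (cf.\ Section~\ref{subsec:PSL2}) and successively eliminate every possibility other than a pure field automorphism of order two. Since $\alpha$ commutes with $\alpha^2$, it normalises $P_2 = C_G(\alpha^2)$ and hence the characteristic subgroup $S_2 = {\rm Soc}(P_2)$. As $\alpha^2$ fixes $P_2 \supseteq S_2$ pointwise, $\alpha|_{S_2}$ is an automorphism of $S_2$ of order dividing $2$, which I write as $\alpha|_{S_2} = g \cdot f$ with $g \in {\rm PGL}_2(F_2)$ and $f \in {\rm Aut}(F_2)$; expanding $(gf)^2 = 1$ in the semidirect product forces $f^2 = 1$ and $g \cdot f(g) = 1$.

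Suppose first $\alpha|_{S_2} = {\rm id}$. For any $p \in P_2$ and $s \in S_2$, $p^{-1}sp \in S_2$ is fixed by $\alpha$, so $\alpha(p)^{-1} s \alpha(p) = \alpha(p^{-1}sp) = p^{-1}sp$, and therefore $\alpha(p) p^{-1} \in C_{P_2}(S_2)$. But $C_{P_2}(S_2) = 1$, as observed in the proof of Lemma~\ref{lemma:abelian-by-finite} (since $S_2$ is the unique minimal normal subgroup of $P_2$ and $Z(S_2) = 1$); hence $\alpha$ fixes $P_2$ pointwise, giving $P_2 \leq P_1$ and so $P_1 = P_2$, contrary to clause (2) of Definition~\ref{def:tight}. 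Suppose next that $f = {\rm id}$ and $g$ is a non-trivial involution; then $C_{S_2}(\alpha|_{S_2}) = C_{S_2}(g)$ is contained in $C_{{\rm PGL}_2(F_2)}(g)$, which by Section~\ref{subsec:PSL2}(\ref{normaliser-i}) is the normaliser of a maximal torus, in particular solvable. Since $S_1 \leq P_1 = C_G(\alpha)$ and $S_1 \leq S_2$, this would force $S_1 \leq C_{S_2}(g)$; but $S_1 \cong {\rm PSL}_2(F_1)$ is a non-abelian simple group, a contradiction.

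These two reductions show that $f$ must be non-trivial, hence of order exactly $2$, so $\alpha|_{S_2}$ lies in the coset ${\rm PGL}_2(F_2) \cdot f$. To upgrade this to the literal equality $\alpha|_{S_2} = f$ demanded by the statement, the plan is to use the cohomological triviality of ${\rm PGL}_2$ over (pseudo)finite fields: for each finite field $F_{2_i}$ appearing in an ultraproduct presentation $F_2 \equiv \prod_i F_{2_i}/\mathcal{U}$, Hilbert 90 combined with the triviality of the Brauer group gives $H^1(\langle f_i \rangle, {\rm PGL}_2(F_{2_i})) = 1$, so any cocycle $g_i$ with $g_i f_i(g_i) = 1$ equals $a_i^{-1} f_i(a_i)$ for some $a_i$; by \L o\'{s}'s Theorem (Theorem~\ref{th:los}) the same holds for $(g, f)$ in ${\rm PGL}_2(F_2)$. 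A direct semidirect-product computation then gives $a(gf)a^{-1} = f$, and absorbing this inner automorphism into the chosen isomorphism $S_2 \cong {\rm PSL}_2(F_2)$ produces $\alpha|_{S_2} = f$. The main delicacy I expect is this cohomological step: one must phrase the required triviality as a first-order statement in each $(F_{2_i}, f_i)$ and verify that the adjusted identification of $S_2$ remains compatible with the embedding $S_1 \leq S_2$ induced by $F_1 \subset F_2$ that is recorded in Remark~\ref{remark:notation}.
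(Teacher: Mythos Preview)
Your Steps 1--4 are correct and, in fact, give a cleaner argument than the paper for the order of $f$: the paper never isolates the case $f=1$ but instead uses from the outset that $\alpha$ fixes $T_1\subset\mathbb{T}_2$ pointwise, together with a Zariski-density argument, to force the ${\rm PGL}_2$-part $y$ into $N_{{\rm PGL}_2(F_2)}(\mathbb{T}_2)=\langle w,\mathbb{T}_2\rangle$, and then rules out the $w$-coset because a field automorphism cannot act as inversion on $T_1$. Your route via ``$C_{{\rm PGL}_2(F_2)}(g)$ is solvable, but $S_1$ is not'' is a perfectly valid alternative for showing $f\neq 1$; just note that Section~\ref{subsec:PSL2}(\ref{normaliser-i}) only treats the split involution $i$, so you should justify solvability for an arbitrary involution (e.g.\ any lift $M$ satisfies $M^2\in Z$, so its centraliser in ${\rm GL}_2$ is a possibly non-split torus).

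The genuine gap is in Step 5. Your \L o\'s transfer needs the \emph{pair} $(F_2,f)$ to be elementarily equivalent, in the language of difference fields, to an ultraproduct of finite fields with order-$2$ automorphisms. You only know that $F_2$ is pseudofinite as a pure field; the automorphism $f$ is not part of that structure, and there is no reason the $F_{2_i}$'s in a given ultraproduct presentation even admit order-$2$ automorphisms (i.e.\ that $|F_{2_i}|$ is a square for $\mathcal{U}$-many $i$). Equivalently, your $H^1$-vanishing for ${\rm PGL}_2$ needs ${\rm Br}(F_2/{\rm Fix}(f))=1$, and you have not shown that ${\rm Fix}(f)$ is pseudofinite (or PAC, or anything that would kill the Brauer obstruction). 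The paper sidesteps exactly this issue: once $y$ is pinned to $\mathbb{T}_2\cong F_2^\times$, the cocycle condition $y\cdot f(y)=1$ is the norm-$1$ condition for the cyclic extension $F_2/{\rm Fix}(f)$, and classical Hilbert~90 --- which holds for \emph{every} cyclic extension, no pseudofiniteness required --- produces $a\in\mathbb{T}_2$ with $y=a^{-1}f(a)$, after which your ``absorb into the isomorphism'' step finishes the job. So the fix is not to aim for $H^1(\langle f\rangle,{\rm PGL}_2(F_2))=1$ directly, but to first localise $g$ to the torus (using that $\alpha$ fixes $T_1$, as in the paper) and then apply Hilbert~90 for $\mathbb{G}_m$.
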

\begin{proof}Note first that, by the definition, $\alpha$ is non-trivial on $P_2$, thus $\alpha$ is of order $2$ on $P_2$; it easily follows that $\alpha$ is of order $2$ on ${\rm Soc}(P_2)$. As $$\alpha|_{{\rm Soc}(P_2)}\in {\rm Aut}({\rm Soc}(P_2))\cong {\rm PGL}_2(F_2) \rtimes {\rm Aut}(F_2),$$ there are $y\in {\rm PGL}_2(F_2)$ and $f\in  {\rm Aut}(F_2)$ such that $\alpha|_{{\rm Soc}(P_2)}=yf$. We show that $y=1$. Let $H:=(T\cap {\rm Soc}(P))$ and $H_2:=(T_2\cap {\rm Soc}(P_2))$. By Lemma~\ref{lemma:socle}, up to conjugacy, $H \leqslant H_2$. As $\alpha=yf$ is trivial on  $H<H_2$ and $f(H_2)=H_2$, we have $1 < y(f(H))=H \leqslant (T_2)^y$. So $y(T_2)=T_2$ and thus $y$ acts either trivially or by inversion on $T_2$ (Section~\ref{notation}(\ref{PGL2-NT-CT-Ci})); the latter is impossible, as in this case the field automorphism $f$ would act by inversion on $H \cong (F^\times)^2$.

We have the tower of fields $F\subseteq  \mathrm{Fix}_{F_2}(f) \subseteq F_2$ and hence $${\rm Soc}(P)\leqslant \mathrm{PSL}_2(\mathrm{Fix}_{F_2}(f) )\leqslant P=C_{P_2}(\alpha).$$ Since ${\rm Soc}(P) \unlhd P$ and $\mathrm{PSL}_2(\mathrm{Fix}_{F_2}(f) )$ is simple, we get $\mathrm{PSL}_2(F)=\mathrm{PSL}_2(\mathrm{Fix}_{F_2}(f) )$ and $F=\mathrm{Fix}_{F_2}(f)$.\end{proof}

\begin{corollary}\label{corol:PGL2inP2} ${\rm PGL}_2(F) \leqslant P$ and for any $n\in \mathbb{N}\setminus\{0\}$, $T \leqslant T_n$ and $U \leqslant U_n$. So $P={\rm PGL}_2(F) \rtimes A$ where $A\leqslant {\rm Aut}(F)$ is abelian.
\end{corollary}
\begin{proof}
We apply Fact~\ref{propo:main} for $L:=F_2$ and $K:=\mathrm{Fix}_{F_2}(f)$. By Proposition~\ref{propo:field-autom}, we have $K=F$; so ${\rm PGL}_2(F) \leqslant P$. Structure of $P$ now follows from Proposition~\ref{prop:structure-pf}. 

As ${\rm PGL}_2(F) \leqslant {\rm PGL}_2(F_n)$, up to conjugacy, the Borel subgroup $B=U\rtimes T$ of ${\rm PGL}_2(F)$ embeds in the Borel subgroup $B_n=U_n \rtimes T_n$ of ${\rm PGL}_2(F_n)$ as $B_n$ is a maximal solvable subgroup of ${\rm PGL}_2(F_n)$. The claim follows. \end{proof}

\subsection{Structures of $\dt$ and $\du$}\label{subsec:structures} From now on, we freely use Fact~\ref{fact:def_closure} without referring to it.

\begin{lemma}\label{lemma-centraliser-of-T}$C_P(T)=C_{\dt}(\alpha)=T$ and $C^\circ_{G}(\dt)=\dt^\circ$. \end{lemma}

\begin{proof} Note first that since $\dt$ is abelian we have $T\leqslant C_{\dt}(\alpha)\leqslant C_P(T)$. Let $x\in C_P(T)$. By Corollary~\ref{corol:PGL2inP2}, we have $x =yf$ where $y \in {\rm PGL}_2(F)$ and $f \in A \leqslant \mathrm{Aut}(F)$; so $f(T)=T$. Thus $y\in N_{{\rm PGL}_2(F)}(T)=\langle w, T\rangle$ so $y$, and hence $f$, act on $T$ either trivially or by inversion (Section~\ref{notation}(\ref{PGL2-NT-CT-Ci})); the latter is not possible as $f$ is a field automorphism. So $f=1$ and $x\in C_{{\rm PGL}_2(F)}(T)=T$. So, the first equality in the statement follows and we also have $\dt^\circ=\overline{C_P(T)}^\circ$. We get \[\dt^\circ \leqslant C^\circ_{G}(\dt)= \overline{C^\circ_{G}(\dt) \cap P}  \leqslant \overline{C_P(T)}^\circ=\dt^\circ\] by the abelianity of $\dt$ and the definition of $\alpha$.\end{proof}

\begin{lemma}\label{lemma:C_T(U)}The Weyl involution $w$ inverts $\dt$ and $C_{\dt}(\du)=C_{\du}(\dt)=N_{\du}(\dt)=1$.
\end{lemma}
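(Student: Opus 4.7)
For the first part, I would use that the Weyl involution $w$ both normalises and inverts $\mathbb{T}$ (Section~\ref{subsec:PSL2}(\ref{normaliser-i})). By Fact~\ref{fact:def_closure}(1), conjugation by $w$ sends $\overline{\mathbb{T}}=\dtt$ to itself, and by Fact~\ref{fact:def_closure}(5) $\dtt$ is abelian. The map $\phi\colon \dtt\to\dtt$, $\phi(t)=t\cdot wtw^{-1}$, is therefore a definable group homomorphism whose kernel is a definable subgroup of $\dtt$ containing $\mathbb{T}$; this kernel then contains $\overline{\mathbb{T}}=\dtt$, proving that $w$ inverts $\dtt$.

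For the centraliser statement, let $t\in C_{\dtt}(\du)$. As $U\leqslant \du$, the element $t$ centralises $U$. By the first part, $wtw^{-1}=t^{-1}$; conjugating $[t,U]=1$ by $w$ gives $[t^{-1},U^w]=1$, so $t$ also centralises $U^w$. Hence $t$ centralises $\langle U,U^w\rangle=S$, since over any field the upper and lower unitriangular matrices generate $\mathrm{SL}_2(F)$, and their images in $\mathrm{PSL}_2(F)$ generate $S$. By Fact~\ref{fact:def_closure}(2) and the hypothesis $\overline{S}=\mathbb{G}$ from condition~(2) of Theorem~\ref{main-stringer}, $C_\mathbb{G}(S)=C_\mathbb{G}(\overline{S})=C_\mathbb{G}(\mathbb{G})=Z(\mathbb{G})$. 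Thus $C_{\dtt}(\du)\leqslant Z(\mathbb{G})$.

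The remaining task is to show $Z(\mathbb{G})=1$. The containment $P\leqslant \mathrm{Aut}(S)$ from condition~(2) is in fact a faithful embedding: its kernel is $C_P(S)$, and $C_P(S)=1$ since $S$ is uniformly definable in $P$ by the Ellers--Gordeev argument already used in the proof of Lemma~\ref{lemma:abelian-by-finite}. Consequently $Z(\mathbb{G})\cap P\leqslant C_P(S)=1$. Since $Z(\mathbb{G})^\circ$ is a connected definable $\alpha$-invariant subgroup of $\mathbb{G}$, supertightness of $\alpha$ gives $Z(\mathbb{G})^\circ=\overline{Z(\mathbb{G})^\circ\cap P}=1$, whence $Z(\mathbb{G})$ is finite. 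By Lemma~\ref{lemma:finite-sets-fixed}, some $\alpha^k$ fixes $Z(\mathbb{G})$ pointwise, so $Z(\mathbb{G})\leqslant P_k$. Since $Z(\mathbb{G})$ centralises $S_k\leqslant \mathbb{G}$, the faithful embedding $P_k\hookrightarrow \mathrm{Aut}(S_k)$ forces $Z(\mathbb{G})\leqslant C_{P_k}(S_k)=1$. Combining, $C_{\dtt}(\du)\leqslant Z(\mathbb{G})=1$, and \emph{a fortiori} $C_{\dtt^\circ}(\du)=1$.

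The main obstacle is that $\dtt$ may strictly contain $\mathbb{T}$, so the algebraic facts about $\mathrm{PSL}_2(F)$ do not transfer to $\dtt$ directly; the $w$-inversion device (extending the inversion from $\mathbb{T}$ to $\dtt$ via definable closure) is the key step, after which the centraliser is pushed into $Z(\mathbb{G})$ and the triviality of the centre is extracted from a combination of the faithfulness of $P_k\hookrightarrow \mathrm{Aut}(S_k)$ and the supertightness hypothesis.
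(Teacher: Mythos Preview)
Your argument is correct, and for the first assertion it is essentially the paper's own proof (the set $\{t\in\dtt:wtw=t^{-1}\}$ is a definable subgroup containing $\mathbb{T}$, hence all of $\dtt$). For the second assertion, however, you take a genuinely different route. The paper first uses Lemma~\ref{lemma-centraliser-of-T} and tightness to see that $(C_{\dtt}(\du)\cap P)=1$, hence $C_{\dtt}(\du)$ is finite; it then argues that $\langle U,T,w\rangle=S$ normalises this finite group, so $\mathbb{G}=\overline{S}$ centralises it, forcing $C_{\dtt}(\du)$ to be an elementary abelian $2$-group, and finishes with a short Sylow $2$-analysis (Proposition~\ref{propo:S}(\ref{Sylow-G})) to exclude $\langle i\rangle$ and the Klein four-group. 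Your approach instead exploits $w$-inversion to see that any $t\in C_{\dtt}(\du)$ centralises both $U$ and $U^{w}$, hence $\langle U,U^{w}\rangle=S$, and therefore lies in $Z(\mathbb{G})$; you then kill $Z(\mathbb{G})$ via supertightness and the faithfulness of $P_k\hookrightarrow\mathrm{Aut}(S_k)$. Your line is shorter and avoids the Sylow case analysis entirely; the paper's line, on the other hand, does not need the generation fact $\langle U,U^{w}\rangle=S$ and yields as a by-product that $C_{\dtt}(\du)\cap P=1$ directly from Lemma~\ref{lemma-centraliser-of-T}. One small remark: your appeal to the Ellers--Gordeev argument to justify $C_{P_k}(S_k)=1$ is superfluous---the hypothesis $P_k\leqslant\mathrm{Aut}(S_k)$ in Theorem~\ref{main-stringer}(2) already means that the conjugation map is injective, so its kernel $C_{P_k}(S_k)$ is trivial.
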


\begin{proof}Let $X= \{ x \in \dt : w x w=x^{-1} \}.$ As $w$ inverts $T$, we have $T \subseteq X$. One easily observes that $X$ is a definable subgroup of $G$. So $X=\dt$ and $w$ inverts $\dt$.

Note that $C_G(\du)=C_G(U)$ and $C_G(\dt)=C_G(T)$. Then, by Lemma~\ref{lemma-centraliser-of-T} we get $C_{\dt}(U) \cap P \leqslant C_{T}(U)=1$. So, we get $$\overline{C^\circ_{\dt}(\du)  \cap P}=C^\circ_{\dt}(\du) =1.$$ Now $C_{\dt}(\du)$ is finite and thus fixed by some power $\alpha^k$. Applying Lemma~\ref{lemma-centraliser-of-T} and Corollary~\ref{corol:PGL2inP2}, we get $C_{\dt}(\du) \leqslant C_{T_k}(U)=1$.

At this point it is clear that $\db=\du \rtimes \dt$; so $N_{\du}(\dt)=C_{\du}(\dt).$ By Lemma~\ref{lemma-centraliser-of-T}, $C_{\du}(\dt)$ is finite and hence, by considering some power $\alpha^r$ and by applying Lemma~\ref{lemma-centraliser-of-T} and Corollary~\ref{corol:PGL2inP2}, we get $C_{\du}(\dt)\leqslant C_{P_r}(T_r)\cap \du=T_r \cap \du \leqslant C_{T_r}(U)=1$. \end{proof}

\begin{lemma}\label{lemma-centraliser-of-U} $\du^\circ=\du$ and $C_{\du}(\alpha)=U$.

\end{lemma}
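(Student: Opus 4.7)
The inclusion $U\subseteq C_{\du}(\alpha)$ is immediate since $U\leqslant S\leqslant P=C_{\mathbb{G}}(\alpha)$ and $U\leqslant \du$; the content lies in the reverse inclusion $C_{\du}(\alpha)\subseteq U$. My plan is to pass to the embedding $P\hookrightarrow \mathrm{Aut}(S)\cong \mathbb{S}\rtimes \mathrm{Aut}(F)$ given by assumption~(2) and reduce to a direct computation inside this semidirect product, using nothing more than the structure of $\mathrm{PGL}_2(F)$ recalled in Section~\ref{subsec:PSL2}.

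The key observation is that $\du$ is abelian (item~(A) at the start of Section~\ref{subsec:structures}) and contains $U$, so every element of $C_{\du}(\alpha)=\du\cap P$ automatically centralises $U$. Hence the lemma will follow as soon as I prove
\[
C_P(U)=U.
\]
For this, I would take $x\in C_P(U)$ and write its image in $\mathrm{Aut}(S)$ as $x=yf$ with $y\in \mathbb{S}$ and $f\in \mathrm{Aut}(F)$. Since any field automorphism preserves $U$ and $x$ normalises $U$, the element $y$ must normalise $U$ as well, so $y\in N_{\mathbb{S}}(U)=\mathbb{B}=U\rtimes \mathbb{T}$ (Section~\ref{subsec:PSL2}(\ref{diagonal-U})). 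Parametrising $U\cong F^{+}$ by $u_{a}\leftrightarrow a$, the action of $y$ on $U$ is multiplication by some $c\in F^{\times}$ (the $\mathbb{T}$-part of $y$) and the action of $f$ is $u_{a}\mapsto u_{f(a)}$. The centralising condition $xu_{a}x^{-1}=u_{a}$ for all $a\in F$ becomes $cf(a)=a$ for all $a\in F$; plugging $a=1$ forces $c=1$, after which $f$ fixes $F$ pointwise and hence $f=\mathrm{id}$. Then $y\in C_{\mathbb{S}}(U)=U$ and so $x\in U$, as required.

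Once $C_{P}(U)=U$ is established, the conclusion is immediate: for any $x\in C_{\du}(\alpha)$ we have $x\in \du\cap P$ and $x$ centralises $U$ (since $\du$ is abelian and $U\leqslant \du$), so $x\in C_{P}(U)=U$.

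The only minor point to be careful about is that the identifications are compatible: for $u\in U\leqslant \mathbb{S}\leqslant P$, its image in $\mathrm{Aut}(S)$ (through the embedding of $P$) coincides with the element of the $\mathbb{S}$-factor in $\mathbb{S}\rtimes \mathrm{Aut}(F)$ corresponding to $u$. This holds because both embeddings are given by conjugation on $S$ and because $\mathbb{S}\leqslant P$ by assumption~(3). I do not anticipate this argument requiring any new rank-theoretic or model-theoretic tool; the whole proof is a small calculation inside $\mathrm{Aut}(\mathrm{PSL}_{2}(F))$, with the main (and only) use of the hypotheses being the abelianness of $\du$ and the embedding of $P$ into $\mathrm{Aut}(S)$.
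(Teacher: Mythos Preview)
Your argument is correct and actually proves the stronger fact $C_P(U)=U$; the lemma follows immediately as you note. The paper takes a different route: starting from $x\in C_{\du}(\alpha)$ it writes $x=utf$ with $u\in U$, $t\in\mathbb{T}$, $f\in\mathrm{Aut}(F)$, observes that $tf=u^{-1}x$ still lies in $C_{\du}(\alpha)\leqslant \du$ (here membership in $\du$, not just in $P$, is used), and then shows $tf\in N_{\du}(\mathbb{T})=C_{\du}(\mathbb{T})=C_{\du}(\dtt)=1$ by invoking Lemma~\ref{lemma:db^circ_centerless}. Your approach bypasses Lemma~\ref{lemma:db^circ_centerless} entirely: the direct computation $cf(a)=a$ on $U\cong F^{+}$ kills both the torus part and the field automorphism in one stroke, exactly parallel to the paper's own proof of Lemma~\ref{lemma-centraliser-of-T} for $C_P(\mathbb{T})=\mathbb{T}$. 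The advantage of your route is that it is self-contained and yields the sharper statement $C_P(U)=U$; the paper's route illustrates how the structural fact $C_{\du}(\dtt)=1$ can be leveraged, at the cost of a dependency on that earlier lemma.
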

\begin{proof} We first prove that $\du$ is connected. Let $X= \du^{\circ} \cap P$ and $Y = \du \cap P$. Clearly $X$ and $X \cap U$ are $T$-normal. By $T$-minimality of $U$ (Section~\ref{notation}(\ref{PGL2-minimalU})), either $X \cap U = 1$ or $X \cap U = U$. The former is not possible as $U$ is infinite and $X \cap U = 1$ implies $U\cong UX/X \hookrightarrow Y/X < \infty$. So $\overline{U}\leqslant \overline{X}$. At the same time we have $\overline{X} = \overline{\du^{\circ} \cap P}= \du^{\circ}$ by the definition of $\alpha$.

We move on to check that $C_{\du}(\alpha)=U$: Since $\du$ is an abelian group, we have $C_{\du}(\alpha)\leqslant C_P(U)\leqslant N_P(U).$ Let $x\in C_{\du}(\alpha)$.  Then $x=yf$, where $y\in {\rm PGL}_2(F)$ and $f\in A \leqslant {\rm Aut}(F)$; so $f(U)=U$. Thus $y\in N_{{\rm PGL}_2(F)}(U)=B$, that is, $y=ut$ for some $u \in U$ and $t \in T$. Now $$u^{-1}x=tf\in (C_{\du}(\alpha)\cap N_P(T)) \leqslant N_{\du}(T)=C_{\du}(T)=C_{\du}(\dt).$$ Lemma~\ref{lemma:C_T(U)} now gives us $x\in U$. \end{proof}

\subsection{Killing the group $A$}\label{subsec:P/S-finite}In this section we shall prove that $P={\rm PGL}_2(F)$. The key point is to show that $C_G(i)=\langle w, \dt \rangle$.

\begin{lemma}\label{lemma:norm-abelian}$N^\circ_{G}(\dt)=C^\circ_{G}(\dt^\circ)=C^\circ_{G}(i)$ is an abelian group.
\end{lemma}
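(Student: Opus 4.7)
The plan is to prove this lemma in two main stages: first establish that $C^\circ_{\mathbb{G}}(i)$ is abelian (the key step), and then derive the chain of equalities using decent torus methods. Since $\mathrm{pr}_2(\mathbb{G})=1$ and $\dtt^\circ$ is connected abelian containing the involution $i$, it contains a unique maximal $2$-torus $T_2\cong \mathbb{Z}_{2^\infty}$ with $i\in T_2$; being a $2$-torus, $T_2$ is a decent torus, so by Theorem~\ref{th:good_torus_centraliser}, $C_{\mathbb{G}}(T_2)=C^\circ_{\mathbb{G}}(T_2)$ is connected.

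For abelianness of $C^\circ_{\mathbb{G}}(i)$, I would apply Fact~\ref{fact:involutory_autom} to the action of the Weyl involution $w$ on $C^\circ_{\mathbb{G}}(i)$. Since $w$ inverts $\dtt$ by Lemma~\ref{lemma:C_T(U)} and $i\in\dtt$ has order $2$, $w$ and $i$ commute, so $w$ acts on $C^\circ_{\mathbb{G}}(i)$ as an involutory automorphism. The hypothesis of Fact~\ref{fact:involutory_autom} demands that $C_{C^\circ_{\mathbb{G}}(i)}(w)\leq C_{\mathbb{G}}(\langle i,w\rangle)$ be finite; this is the main obstacle. Setting $V:=\langle i,w\rangle$, a Klein $4$-group since $iw=wi$ and $i\neq w$ (as $w\notin\dtt$), I would argue by contradiction that $C^\circ_{\mathbb{G}}(V)$ is trivial. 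If $C^\circ_{\mathbb{G}}(V)$ contains an involution $j$, then Theorem~\ref{th:sylow-degenerated2} applied inside $C^\circ_{\mathbb{G}}(V)$ yields a non-trivial $2$-torus $T_0\leq C^\circ_{\mathbb{G}}(V)$ (using odd type); this $T_0$ is centralised by both $i$ and $w$, and since $i$, $w$ are distinct involutions and a $2$-torus has a unique involution, at least one of them (say $x\in\{i,w\}$) lies outside $T_0$, producing an abelian $2$-subgroup $T_0\langle x\rangle\cong\mathbb{Z}_{2^\infty}\times\mathbb{Z}/2$. But by Proposition~\ref{propo:S}(\ref{Sylow-G}), the Sylow $2$-subgroups of $\mathbb{G}$ are $\mathbb{Z}_{2^\infty}\rtimes\langle w\rangle$ whose abelian subgroups are either $\mathbb{Z}_{2^\infty}$ or a Klein $4$-group, so $\mathbb{Z}_{2^\infty}\times\mathbb{Z}/2$ does not embed, contradiction. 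The remaining case, where $C^\circ_{\mathbb{G}}(V)$ is a connected infinite subgroup of degenerate type, requires a separate argument exploiting the centralising action of the full Klein $4$-group $V$ and the embedding of $C^\circ_{\mathbb{G}}(V)$ in $C^\circ_{\mathbb{G}}(i)$.

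Once $C^\circ_{\mathbb{G}}(i)$ is known to be abelian, the equalities follow cleanly. Since it contains $T_2$ and is abelian, it centralises $T_2$, giving $C^\circ_{\mathbb{G}}(i)\leq C_{\mathbb{G}}(T_2)=C^\circ_{\mathbb{G}}(T_2)\leq C^\circ_{\mathbb{G}}(i)$ (the last inclusion because $i\in T_2$), so $C^\circ_{\mathbb{G}}(i)=C_{\mathbb{G}}(T_2)$. The identical reasoning with $\dtt^\circ$ in place of $T_2$ yields $C^\circ_{\mathbb{G}}(i)=C^\circ_{\mathbb{G}}(\dtt^\circ)$. Now $\dtt\leq C_{\mathbb{G}}(T_2)=C^\circ_{\mathbb{G}}(i)$ since $\dtt$ is abelian and contains $T_2$, so the abelian group $C^\circ_{\mathbb{G}}(i)$ centralises, hence normalises, $\dtt$; therefore $C^\circ_{\mathbb{G}}(i)\leq N^\circ_{\mathbb{G}}(\dtt)$. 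Conversely, $N^\circ_{\mathbb{G}}(\dtt)$ normalises $T_2$ (characteristic in $\dtt$ as the unique maximal $2$-torus of $\dtt^\circ$), and the standard rigidity of decent tori---a connected group normalising $T_2\cong\mathbb{Z}_{2^\infty}$ centralises it because $\mathrm{Aut}(\mathbb{Z}_{2^\infty})$ is totally disconnected---yields $N^\circ_{\mathbb{G}}(\dtt)\leq C_{\mathbb{G}}(T_2)=C^\circ_{\mathbb{G}}(i)$, completing the three equalities. The hard part of the whole proof is the finiteness step in the middle paragraph, particularly handling the possible degenerate-type component of $C^\circ_{\mathbb{G}}(V)$.
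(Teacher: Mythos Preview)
Your strategy is genuinely different from the paper's, and it has a real gap at exactly the point you flag. The paper does \emph{not} argue intrinsically in $\mathbb{G}$ via Fact~\ref{fact:involutory_autom}; instead it exploits the supertight automorphism $\alpha$ throughout. Concretely, it uses the explicit description $N_P(\mathbb{T})\cong \langle w,\mathbb{T}\rangle\rtimes(P\cap\mathrm{Aut}(F))$ and $N_P(U)\cong \mathbb{B}\rtimes(P\cap\mathrm{Aut}(F))$, together with Lemma~\ref{lemma:commutator-subgroups}, to show that $(N^\circ_P(\mathbb{T})\cap N^\circ_P(U))'\leqslant U\cap\langle w,\mathbb{T}\rangle=1$; hence $N^\circ_P(\mathbb{T})$ is abelian, and then tightness transfers this to $N^\circ_{\mathbb{G}}(\dtt)=\overline{N^\circ_P(\mathbb{T})}$. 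The equalities with $C^\circ_{\mathbb{G}}(\dtt^\circ)$ and $C^\circ_{\mathbb{G}}(i)$ are obtained by first checking $C_P(i)=N_P(\mathbb{T})$ and $C^\circ_P(\mathbb{T}^\circ)=N^\circ_P(\mathbb{T})$ at the level of $P$, and again transferring via tightness.

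Your attempt to prove $C_{C^\circ_{\mathbb{G}}(i)}(w)$ is finite cannot be completed by Sylow arguments alone. The obstruction is precisely the ``field automorphism'' part of $P$: at this stage $P=\mathbb{S}$ is not yet known (that is Proposition~\ref{proposition:P/S}, which \emph{depends} on the present lemma), and any field automorphism $f\in P\cap\mathrm{Aut}(F)$ fixes both $i$ (the unique involution of $\mathbb{T}$) and $w$ (whose matrix entries lie in the prime field), so $P\cap\mathrm{Aut}(F)\leqslant C_P(V)$. If this group is infinite, then $C^\circ_{\mathbb{G}}(V)$ is infinite (by tightness, since $V\subset P$ makes $C^\circ_{\mathbb{G}}(V)$ $\alpha$-invariant), and it will typically be of degenerate type. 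So the ``remaining case'' you set aside is exactly the case that occurs, and ruling it out requires input from the structure of $P$, i.e.\ essentially the paper's method. A secondary issue: you assume $i\in\dtt^\circ$ and that $\dtt^\circ$ contains a copy of $\mathbb{Z}_{2^\infty}$, but neither is available yet (connectedness of $\dtt$ is established only in Lemma~\ref{lemma:centraliser-of-i}, and $\mathbb{T}\cong F^\times$ need not have infinite $2$-torsion for a pseudofinite $F$).
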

\begin{proof}  We first prove that $ N^\circ_P(T)$ is abelian. 
\begin{claim*} 
$ N^\circ_P(T)$ is abelian. 
\end{claim*}\begin{proofclaim}We first observe that $$(N_P(T))' \leqslant N_{{\rm PGL}_2(F)}(T) \ \text{and} \  (N^\circ_P(U))' \leqslant U.$$ If $x \in P $ then $x=yf$ where $y\in {\rm PGL}_2(F)$ and $f\in A$; so $f(U)=U$ and $f(T)=T$. We get $x \in {\rm PGL}_2(F)N_P (U)$ and \[ P/{\rm PGL}_2(F) \cong N_P (U)/(N_P (U) \cap {\rm PGL}_2(F)) = N_P (U)/B.\] Similarly, $P/{\rm PGL}_2(F) \cong N_{P} (T)/N_{{\rm PGL}_2(F)}(T) $. As $P/{\rm PGL}_2(F) \cong A$ is abelian, we have $(N_P(U))' \leqslant B$ and $(N_P(T))' \leqslant N_{{\rm PGL}_2(F)}(T)$. So $\overline{(N^\circ_P(U))'}=\overline{N^\circ_P(U)}'  \unlhd \db$ and hence the group $\overline{N^\circ_P(U)}=\overline{N_P(U)}^\circ$ is solvable. Then $\overline{N^\circ_P(U)}'$ is nilpotent by \cite{Nesin1989}. Note also that $\du \unlhd \overline{N^\circ_P(U)}'$  (Fact~\ref{fact:def_closure} and Lemma~\ref{lemma-centraliser-of-U}) and that $ \overline{N^\circ_P(U)}'$ is $\alpha$-invariant (being a characteristic subgroup of $\overline{N^\circ_P(U)}$). Set $H:=(Z(\overline{N^\circ_P(U)}')\cap \du)^\circ$. Now $H$ is an infinite (\cite[Lemma 5.1]{ABC}) definable connected $\dt$-normal and $\alpha$-invariant subgroup of $\du$. Lemma~\ref{lemma-centraliser-of-U} gives us $H \cap P \leqslant C_{\du}(\alpha) =U$ and $H \cap P$ is infinite by the definition of $\alpha$. Since $H \cap P$ is $T$-normal, we get $H \cap P= U$. So $H = \overline{H \cap P}=\du$ and $\du \leqslant Z(\overline{N^\circ_P(U)}')$. This means that $(\dt \cap \overline{N^\circ_P(U)}' )\leqslant C_{\dt}(\du)=1$ (Lemma~\ref{lemma:C_T(U)}) and hence $\overline{N^\circ_P(U)}'\leqslant \du$. We get $(N^\circ_P(U))' \leqslant U$.

Now, since $P={\rm PGL}_2(F) \rtimes A$, we have $$N_{P}(T)= N_{{\rm PGL}_2(F)}(T)\rtimes A \cong \langle w, T\rangle \rtimes A \,\, {\rm and} \,\, N_{P}(U)=N_{{\rm PGL}_2(F)}(U)\rtimes A \cong B\rtimes A.$$ So $N_{P}(T)\cap N_P(U)\cong  T\rtimes A$ and $ N_P(T)$ is a finite extension of $N_{P}(T)\cap N_P(U)$ and of $N^\circ_P(T) \cap N^\circ_P(U)$. By the observation above we get $$(N^\circ_P(T) \cap N^\circ_P(U))' \leqslant  U \,\, {\rm and} \,\, (N^\circ_P(T) \cap N^\circ_P(U))'  \leqslant N_{{\rm PGL}_2(F)}(T)= \langle w, T\rangle.$$ So $(N^\circ_P(T) \cap N^\circ_P(U))' =1$, that is, $N^\circ_P(T) \cap N^\circ_P(U)$ is an abelian group; and so is $ N^\circ_P(T)$.\end{proofclaim}

It remains to prove that $\overline{N^\circ_P(T)}=N^\circ_{G}(\dt)=C^\circ_{G}(\dt^\circ)=C^\circ_{G}(i)$. Since $i$ is the unique involution of $T$ (Section~\ref{notation}(\ref{PGL2-torus})), we have $N_P(T) \leqslant C_P(i)$. Given $x\in C_P(i)$, we have $x=yf$ where $y\in {\rm PGL}_2(F)$ and $f\in A$. So $f$ fixes $i$ and $y\in C_{{\rm PGL}_2(F)}(i)=N_{{\rm PGL}_2(F)}(T)$ (Section~\ref{notation}(\ref{PGL2-NT-CT-Ci})). Thus $C_P(i)= N_P(T)$ and $$\overline{N^\circ_P(T)}=\overline{C^\circ_P(i)}=\overline{(C^\circ_{G}(i) \cap P)}=C^\circ_{G}(i).$$ One easily observes that $N_P(\dt)= N_P(T)$: It is enough to check $N_P(\dt)\leqslant N_P(T)$. If $x\in N_P(\dt) $ then for any $t\in T$, we have $xtx^{-1} \in \dt \cap P =T$ (Lemma~\ref{lemma-centraliser-of-T}). Note also that $C^\circ_P(T^\circ)=N^\circ_P(T)$ as the abelian group $ N^\circ_P(T)$ contains $T^\circ$ and that $C_P(\dt^\circ)=C_P(T^\circ)$. Now, by the definition of $\alpha$, we get $$C^\circ_{G}(i)=N^\circ_{G}(\dt)=\overline{N^\circ_{G}(\dt) \cap P} = \overline{N^\circ_P(T)}=\overline{C^\circ_P(T^\circ)}=\overline{C^\circ_{G}(\dt^\circ) \cap P}=C^\circ_{G}(\dt^\circ).$$ \end{proof}

\begin{lemma}\label{lemma:centraliser-of-i}$C_{G}(i)=\langle w, \dt \rangle=N_{G}(\dt)$ and $C_{G}(\dt)=\dt=\dt^\circ$. 
\end{lemma}

\begin{proof} We first prove that $i\in \dt^\circ$. Since $i$ is the unique involution of $T$, we know that $C_P(i)=(T \rtimes \langle w \rangle ) \rtimes A$. Also, we have $\overline{C_P(i)}^\circ=C^\circ_G(i)$ which gives $\dt^\circ \overline{A}^\circ=C^\circ_G(i)$.  By Theorem~\ref{fact:centraliser-Sylow-2}, $i$ belongs to some maximal $2$-torus, say $\Sigma$, of $G$ and hence $i\in \Sigma \leqslant C_G(\Sigma)$. Since  $C_G(\Sigma)$ is connected by Theorem~\ref{th:good_torus_centraliser} we get $i \leqslant C_G(\Sigma) \leqslant C^\circ_G(i)$. So $i=ta$ for some $t\in \dt^\circ$ and $a\in \overline{A}^\circ$ with $\alpha(t)\alpha(a)=ta$. Notice then that $T\cap \overline{A}=1$ as $T\cong F^\times$ acts on $U \cong F^+$ as multiplication but $A$, and hence $\overline{A}$, fixes the the unit element $1_F$ of $F$. Now, by Lemma~\ref{lemma-centraliser-of-T}, we have $(\dt\cap \overline{A})^\circ =\overline{(\dt\cap \overline{A})^\circ \cap P}\leqslant \overline{T\cap \overline{A}}=1$. So $\dt\cap \overline{A}$ is finite and point-wise fixed by some power of $\alpha$. Thus, by replacing $\alpha$ with a suitable power, we have $\dt \cap \overline{A} \leqslant T \cap \overline{A}=1$. Since we have $\alpha(a) a^{-1}\in \dt^\circ \cap \overline{A}^\circ$ we get $\alpha(a)=a\in \overline{A}\cap P$. Bu $a$ centralises $T$ (as $i$ does); so $a\in T \cap\overline{A}=1$ and $i=t\in \dt^\circ$.

Let $\Theta$ be the unique Sylow $2$-subgroup of the abelian group $\dt^\circ$. Since $i\in \dt^\circ$, $\Theta$ is infinite and connected by Theorem~\ref{th:connected-solv-Sylow}. So $\Theta\cong (\mathbb{Z}_{2^\infty})^\ell$ for some $\ell \in \mathbb{N}$. Now the set of involutions $I(\Theta)$ of $\Theta$ is finite and thus point-wise fixed by some power $\alpha^k$. By Lemma~\ref{lemma-centraliser-of-T}, $I(\Theta)\leqslant I(T_k)=\{i\}$. Hence $\Theta \cong \mathbb{Z}_{2^\infty}$. As $C_G(\Theta)$ is connected (Theorem~\ref{th:good_torus_centraliser}) and $\Theta$ is abelian, we have $\Theta \leqslant C_G(\Theta) \leqslant C^\circ_G(i)$. So, by abelianity of $C^\circ_G(i)=C^\circ_G(\dt^\circ)$ (Lemma~\ref{lemma:norm-abelian}), $C_G(\Theta)=C^\circ_G(\dt^\circ)$. Since $C_G(\Theta)$ is connected and abelian,   Lemma~\ref{lemma-centraliser-of-T}  gives us $$\dt \leqslant C_G(\dt^\circ) \leqslant C_G(\Theta) \leqslant C^\circ_G(\dt)=\dt^\circ.$$ So $C^\circ_G(i)=\dt$ and ${\rm pr}_2(\dt)=1$. It follows that ${\rm pr}_2(G)=1$ as $C^\circ_G(i)$ contains a maximal $2$-torus of $G$ (Theorem~\ref{fact:centraliser-Sylow-2}).

The finite group $C_G(i)/\dt$ has exponent $2$ by Theorem~\ref{th:centraliser_of_involution}, and, Fact~\ref{fact:centraliser-Sylow-2} tells us that any Sylow $2$-subgroup of $C_G(i)$ is isomorphic to $\Theta\rtimes \langle w \rangle$. So $C_{G}(i)= \langle w, w_1, \ldots, w_n, \dt\rangle,$ where where $w_k$ inverts $\Theta$ for all $k \in \{1,\ldots, n\}$ (note that we have Torsion lifting \cite[Ex. 11 page. 96]{Borovik-Nesin}). The claim easily follows.  \end{proof}

\begin{propo}\label{proposition:P/S} $P={\rm PGL}_2(F)$. \end{propo}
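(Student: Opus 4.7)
The plan is to leverage the isomorphism
\[P/\mathbb{S} \;\cong\; N_{P}(\mathbb{T})/N_{\mathbb{S}}(\mathbb{T})\]
from Lemma~\ref{lemma:norm-U-field-autom}, together with the freshly established structure of $N_{\mathbb{G}}(\dt)$ in Lemma~\ref{lemma:centraliser-of-i}, to force $N_{P}(\mathbb{T})$ to collapse into $N_{\mathbb{S}}(\mathbb{T}) = \langle w,\mathbb{T}\rangle$. In other words, I would show there is no room in $P$ for a non-trivial field automorphism of $F$ to act once one keeps track of how $\mathbb{T}$ normalises inside $\mathbb{G}$.

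First I would observe that every $a \in N_{P}(\mathbb{T})$ normalises the definable closure $\dt = \overline{\mathbb{T}}$ by Fact~\ref{fact:def_closure}(i), so
\[N_{P}(\mathbb{T}) \;\leqslant\; P \cap N_{\mathbb{G}}(\dt) \;=\; P \cap \langle w,\dt\rangle,\]
where the last equality is Lemma~\ref{lemma:centraliser-of-i}. Since $w \in \mathbb{S} \leqslant P$, this intersection equals $(P\cap \dt)\cup w(P\cap\dt)$, so the question reduces to computing $P \cap \dt$.

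Next I would identify $P \cap \dt$: since $\dt$ is abelian and $C_{\mathbb{G}}(\dt) = \dt$ by Lemma~\ref{lemma:centraliser-of-i}, any element of $P \cap \dt$ centralises $\mathbb{T} \leqslant \dt$, hence lies in $C_{P}(\mathbb{T}) = \mathbb{T}$ by Lemma~\ref{lemma-centraliser-of-T}; the reverse inclusion is immediate. Thus $P\cap \langle w,\dt\rangle = \langle w,\mathbb{T}\rangle = N_{\mathbb{S}}(\mathbb{T})$, giving $N_{P}(\mathbb{T}) \leqslant N_{\mathbb{S}}(\mathbb{T})$; the reverse inclusion is trivial since $N_{\mathbb{S}}(\mathbb{T}) \subseteq \mathbb{S} \leqslant P$. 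Feeding this back into Lemma~\ref{lemma:norm-U-field-autom} yields $P/\mathbb{S} = 1$, i.e.\ $P = \mathbb{S}$.

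I do not expect a real obstacle here: essentially all the heavy lifting has already been done, in particular the identification of $N_{\mathbb{G}}(\dt)$ with $\langle w,\dt\rangle$ and of $C_{P}(\mathbb{T})$ with $\mathbb{T}$. The only point that requires minor care is keeping the distinction between $P$ viewed as a subgroup of $\mathbb{G}$ and $P$ viewed (through conjugation on $S$) as a subgroup of $\mathrm{Aut}(S) \cong \mathrm{PGL}_{2}(F)\rtimes \mathrm{Aut}(F)$, but both perspectives agree on the subgroup normalisers above.
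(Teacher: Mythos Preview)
Your proof is correct and follows essentially the same approach as the paper. Both arguments hinge on Lemma~\ref{lemma:centraliser-of-i} ($N_{\mathbb{G}}(\dt)=\langle w,\dt\rangle$) to pin down elements of $P$ normalising $\mathbb{T}$; the paper works element-wise with the decomposition $x=yf$ and rules out $f\neq 1$ by noting a field automorphism cannot invert $\mathbb{T}$, while you route through the quotient isomorphism of Lemma~\ref{lemma:norm-U-field-autom} and instead compute $P\cap\dt=\mathbb{T}$ via Lemma~\ref{lemma-centraliser-of-T}---a slightly cleaner packaging of the same idea.
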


\begin{proof}We have $P={\rm PGL}_2(F)\rtimes A$, where $A\leqslant {\rm Aut}(F)$. So any element of $A$ belongs to $C_G(i)=\langle \dt, w \rangle$ (Lemma~\ref{lemma:centraliser-of-i}) and thus either centralises or inverts $T$. Since no field automorphism act by inversion we get $A=1$.\end{proof}

\subsection{Controlling the $G$-conjugates of $\du$} \label{subsec:normalisers} In this section we prove that the $
G$-conjugates of $\du$ which are disjoint from $\du$ intersect $\db$ trivially.

\begin{lemma}\label{lemma:U-minimal}For $\bar{u} \in \du\setminus \{0\}$, we have $C_{\dt}(\bar{u})=1$. 
\end{lemma}
\begin{proof}We first observe that $\du=\bigoplus_{j=0}^{m-1} \alpha^j(X)$ for some $\db$-minimal subgroup $X$ (i.e. $X$ is infinite, definable and normal subgroup
of $\db$ which is minimal with respect to these properties): The group $\overline{U}$ has no non-trivial, finite, $\overline{T}$-invariant subgroup as one such would be centralised by the connected group $\overline{T}$ (Lemma~\ref{lemma:centraliser-of-i}) contradictory to Lemma~\ref{lemma:C_T(U)}. Now, let $X \leqslant \overline{U}$ be $\db$-minimal and let $m$ be maximal so that $Y = \sum_{j = 0}^{m-1} \alpha^j(X)$ is a direct sum. Then, $\alpha^m(X) \cap Y \neq 0$ is definable and $\dt$-invariant, but also the image of a $\dt$-minimal group. So $\alpha^m(X) \leqslant Y$. Therefore $Y$ is definable, connected, and $\alpha$-invariant; thus $\overline{Y \cap P} = Y$. So $Y \cap P$ is a non-trivial, $\dt\cap P=T$-normal subgroup of $\overline{U} \cap P = U$. We get $Y \cap P = U$ and $Y = \overline{U}$.

We may now prove that there is some $r\in \mathbb{N}\setminus \{0\}$ so that $C_{\dt}(X)$ is $\alpha^r$-invariant: It is well-known that for any $x \in X\setminus\{0\}$, one has $C_{\dt}(x) = C_{\dt}(X)$ (see e.g. \cite[Fact 2.40]{CJ}). Let $x \in X\setminus\{0\}$ and write $\alpha^m(x) = \sum_{j = 0}^{m-1} \alpha^j(x_j)$ as in the notation above. Let $j$ be such that $x_j \neq 0$. Then: $$\alpha^m(C_{\dt}(X)) = \alpha^m(C_{\dt}(x))=C_{\dt}
(\alpha^m(x))\leqslant C_{\dt}(\alpha^j(x_j))=\alpha^j(C_{\dt}(x_j))=\alpha^j(C_{\dt}(X))$$
proving that $C_{\dt}(X)$ is $\alpha^{m-j}$-invariant where $j < m$.

Denote then $X_k=\alpha^k(X)$ for $k\in \{0, \ldots, m-1\}$. By above, after replacing $\alpha$ with a suitable power, we have $C_{\dt}(X_k)=\alpha(C_{\dt}(X_k))$. Now, let $\bar{u} \in \du\setminus \{0\}$. Clearly there is some $0\neq x_k \in X_k$ so that $C_{\dt}(u) \leqslant C_{\dt}(x_k)=C_{\dt}(X_k)$. We have $$C^\circ_{\dt}(X_k)=\overline{C^\circ_{\dt}(X_k)\cap P} = \overline{C^\circ_{T}(X_k)}.$$ By Proposition~\ref{proposition:P/S}, given $1\neq t\in T$, we have $C^\circ_G(t)=\overline{C^\circ_{P}(t)}=\dt$. So $C_{T}(X_k)=1$ as $X_k$ is infinite. This means that $C_{\dt}(X_k)$ is finite and, by replacing $\alpha$ again with a suitable power, we get $C_{\dt}(\bar{u}) \leqslant C_{\dt}(X_k) \leqslant C_{T}(X_k)=1$.\end{proof}

\begin{lemma}\label{lemma:C_G(u)} For $\bar{u} \in \du\setminus \{0\}$, we have $C^\circ_{G}(\bar{u})=\du$.\end{lemma}

\begin{proof} It is enough to show that $C^\circ_{G}(\bar{u}) \leqslant \du$. It is easy to observe that $i$ inverts $\du$ since it inverts $U$ (Section~\ref{notation}(\ref{PGL2-NU-CU})) and hence $i$ is an involutive automorphism of $C^\circ_G(\bar{u})$. By Lemma~\ref{lemma:centraliser-of-i} and Lemma~\ref{lemma:U-minimal}, we have $C^\circ_{G}(i) \cap C^\circ_G(\bar{u})=1$. So $C_{G}(i) \cap C^\circ_G(\bar{u})$ is finite and thus $C^\circ_{G}(\bar{u})$ is abelian by Fact~\ref{fact:involutory_autom}. Now, let $u \in U\setminus \{0\}$. By Proposition~\ref{proposition:P/S}, $C^\circ_{G}(u)=\overline{C^\circ_{P}(u)}=\du$. Since  $C^\circ_{G}(\bar{u})$ is abelian, we get $u\in C^\circ_{G}(\bar{u}) \leqslant C^\circ_{G}(u)=\du$. \end{proof}

A group $L$ is called a Frobenius group if it has a subgroup $1\neq H < L$ with $H^\ell \cap H = 1$ for all $\ell \in L\setminus H$. In this case $H$ is called the Frobenius complement of $L$. The following fact will be used several times in the rest of the paper.

\begin{fact}\label{Frobenius_splits}\begin{enumerate}
\item \emph{(\cite[Lemma 11.21 and Corollary 11.24]{Borovik-Nesin}).} Let $L$ be a Frobenius group of finite Morley rank with a Frobenius complement $H$. Assume that $L$ has a non-trivial normal subgroup disjoint from $H$ and that $H$ is infinite, connected and contains an involution. Then $L$ is connected.
\item\emph{ (\cite[Theorem 11.32]{Borovik-Nesin}).} Let $L=N\rtimes H$ be a solvable Frobenius group of finite Morley rank. If $X \cap N =1 $ for some $X \leqslant L$ then $X$ is conjugate to a subgroup of $H$.\end{enumerate} \end{fact}

\begin{lemma}\label{lemma:norm-U} $N_{G}(\du)=N_{G}(\db)=\db=\du \rtimes \dt$ is a Frobenius group. \end{lemma}

\begin{proof} We have  $$N^\circ_G(\db) =\overline{N^\circ_{G}(\db) \cap P} \leqslant   \overline{N_P(\db)}=\db.$$ The last equality follows by Proposition~\ref{proposition:P/S} together with the observation  that  $N_P(\db) = N_P(B)$ as $\db \cap P = B$ (Lemma~\ref{lemma-centraliser-of-T} and Lemma~\ref{lemma-centraliser-of-U}). Since $\db$ is connected we get $N^\circ_G(\db)=\db$. Similarly, we have $N^\circ_G(\du)=\db$.

Now, let $\bar{t}\in \dt \setminus \{1\}$ and $n \in N_{G}(\db)$. By Lemma~\ref{lemma:U-minimal}, $C_{N^\circ_G(\db)}(\bar{t})=\dt$. So, if there is $1\neq x \in \dt^n \cap \dt$, then $x=t_1^n=t_2$ for some $\bar{t}_1, \bar{t}_2 \in \dt \setminus \{1\}$ and  \[\dt^n = C_{N^\circ_G(\db)}(\bar{t}_1)^n= C_{N^\circ_G(\db)}(\bar{t}_1^n)= C_{N^\circ_G(\db)}(x)= C_{N^\circ_G(\db)}(\bar{t}_2)=\dt.\] So $n \in N_{G}(\dt) \cap N_{G}(\db)=\dt$ (Lemma~\ref{lemma:centraliser-of-i}). Therefore $N_G(\db)$ (similarly $N_G(\du)$) is a Frobenius group with Frobenius complement $\dt$ and hence connected by Fact~\ref{Frobenius_splits}. \end{proof}

\begin{propo}\label{finite_intersection} $\db \cap \du^g=1$ for all $g \in {G}\setminus \db$.\end{propo}

\begin{proof}Notice first that $\du \cap \du^g=1$: if not, then there is $1\neq x \in \du \cap \du^g$ and, by Lemma~\ref{lemma:C_G(u)}, we have $\du=C^\circ_G(x)= \du^g$ with $g\notin N_G(\du)$ (Lemma~\ref{lemma:norm-U}). Let $V=\db \cap \du^g$; so $V \cap \du=1$. Then $V=X^{\bar{b}}$ for some $X \leqslant \dt$ and $\bar{b} \in \db$ (Lemma~\ref{lemma:norm-U} and Fact~\ref{Frobenius_splits}). So, if there is $1\neq v\in V$, then $v={\bar{t}}^{\bar{b}}={\bar{u}}^g$ for some $1\neq \bar{t}\in \dt$ and $1 \neq \bar{u}\in \du$. We get: $\dt^{\bar{b}} \leqslant C^\circ_G({\bar{t}}^{\bar{b}}) = C^\circ_G(v)=C^\circ_G({\bar{u}}^g)=\du^g.$ This contradiction shows that $V=1$. \end{proof}

\subsection{Identification of $G$}\label{subsec:final-identification} Recall that our aim is to invoke Theorem~\ref{th:DN}. The identification of $G$ may be now done using known techniques. To our knowledge such arguments were first used in the pre-print \cite{Jaligot2000} (this pre-print was never published but its context is essentially in \cite[Section 4]{CJ}). Though all the arguments of the rest of our proof can be found in the literature, to keep the text self-contained, we write down the proof of the final identification of $G$.

Below, we repeatedly use the axioms of the Morley rank as well as the following well-known rank computations (for a group of finite Morley rank $H$):\begin{enumerate}[-]
\item Let $H=AB$, with $A$ and $B$ definable. Then ${\rm rk}(H)={\rm rk}(A)+{\rm rk}(B)-{\rm rk}(A \cap B)$ (\cite[Section 4.2. Ex. 18]{Borovik-Nesin}).
\item Let $h \in H$. Then ${\rm rk}(H)={\rm rk}(x^H)+ {\rm rk}(C_H(x))$ (\cite[Ex. 13, Page 67]{Borovik-Nesin}).
\end{enumerate}

\begin{lemma}\label{lemma:generic-inv}${\rm rk}(i^{G})={\rm rk}(i^G \setminus \db)$.
\end{lemma}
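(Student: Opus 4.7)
The plan is to compute ${\rm rk}(i^{\mathbb{G}})$ by the orbit-stabilizer formula and to bound ${\rm rk}(i^{\mathbb{G}} \cap \db)$ by describing the involutions of $\db$ explicitly, and then to show the first quantity strictly exceeds the second. Once we have this strict inequality, the conclusion ${\rm rk}(i^{\mathbb{G}}) = {\rm rk}(i^{\mathbb{G}} \setminus \db)$ follows because removing a definable subset of strictly smaller Morley rank from a definable set of finite Morley rank leaves the rank unchanged.

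First I would note that, by Lemma~\ref{lemma:centraliser-of-i}, $C_{\mathbb{G}}(i) = \langle w, \dt\rangle$ is a finite extension of $\dt$, so ${\rm rk}(C_{\mathbb{G}}(i)) = {\rm rk}(\dt)$, and hence the orbit-stabilizer formula gives ${\rm rk}(i^{\mathbb{G}}) = {\rm rk}(\mathbb{G}) - {\rm rk}(\dt)$. Next I would identify the involutions in $\db = \du \rtimes \dt$: since $\dt$ has a unique involution $i$ (as $\dt = C^\circ_{\mathbb{G}}(i)$ has a Sylow $2$-subgroup isomorphic to $\mathbb{Z}_{2^\infty}$) and since the split Frobenius property (Lemma~\ref{lemma:norm-U}) combined with $i$ inverting $\du$ prevents $\du$ from containing any involution (an involution $\bar{u} \in \du$ would satisfy $i\bar{u}i^{-1} = \bar{u}^{-1} = \bar{u}$, forcing $\bar{u} \in C_{\du}(\dt) = 1$ by Lemma~\ref{lemma:db^circ_centerless}), every involution of $\db$ has the form $\bar{u}\,i$ for some $\bar{u} \in \du$. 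Conversely, since $i$ inverts $\du$, every such $\bar{u}\,i$ squares to $\bar{u}(i\bar{u}i) = \bar{u}\bar{u}^{-1} = 1$. Thus the set of involutions in $\db$ is the definable coset $\du\,i$, whose Morley rank is ${\rm rk}(\du)$, and so ${\rm rk}(i^{\mathbb{G}} \cap \db) \leq {\rm rk}(\du)$.

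Finally I would observe that $\db$ is a proper subgroup of $\mathbb{G}$ (for instance $w \notin \db$, since otherwise $w$ would normalise $\mathbb{B} = \db \cap P$, which it does not). As $\db = \db^\circ$ is definable and connected, and $\mathbb{G}$ is connected, $\db$ is a proper definable connected subgroup, hence ${\rm rk}(\db) < {\rm rk}(\mathbb{G})$. Combining with ${\rm rk}(\db) = {\rm rk}(\du) + {\rm rk}(\dt)$ from the semi-direct product decomposition (Corollary~\ref{corol:semi-direcrt}), we obtain
\[
{\rm rk}(i^{\mathbb{G}}) \;=\; {\rm rk}(\mathbb{G}) - {\rm rk}(\dt) \;>\; {\rm rk}(\db) - {\rm rk}(\dt) \;=\; {\rm rk}(\du) \;\geq\; {\rm rk}(i^{\mathbb{G}} \cap \db),
\]
whence ${\rm rk}(i^{\mathbb{G}}) = {\rm rk}(i^{\mathbb{G}} \setminus \db)$. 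There is no real obstacle here; the only point that deserves care is the identification of the involutions in $\db$, which rests on combining the uniqueness of $i$ in $\dt$ with the Frobenius structure of $\db$ to rule out involutions in $\du$.
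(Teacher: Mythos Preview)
Your proof is correct. Both your argument and the paper's reduce to showing ${\rm rk}(i^{\mathbb{G}} \cap \db) < {\rm rk}(i^{\mathbb{G}})$, and both compute ${\rm rk}(i^{\mathbb{G}}) = {\rm rk}(\mathbb{G}) - {\rm rk}(\dt)$ via orbit--stabiliser, so the overall shape is the same. The difference lies in how the bound on ${\rm rk}(i^{\mathbb{G}} \cap \db)$ is obtained. The paper observes that the Sylow $2$-subgroups of $\db$ are all isomorphic to $\mathbb{Z}_{2^\infty}$ (from Lemma~\ref{lemma:centraliser-of-i}), so involutions in $\db$ form a single $\db$-conjugacy class, whence $i^{\mathbb{G}} \cap \db = i^{\db}$; orbit--stabiliser inside $\db$ then gives ${\rm rk}(i^{\db}) = {\rm rk}(\db) - {\rm rk}(C_{\db}(i)) = {\rm rk}(\db) - {\rm rk}(\dt)$, and one concludes by contradiction that equality of ranks would force ${\rm rk}(\mathbb{G}) = {\rm rk}(\db)$. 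You instead explicitly parametrise the involutions of $\db = \du \rtimes \dt$ as the coset $\du\, i$, using that $i$ is the unique involution of $\dt$ and inverts $\du$, which gives the same bound ${\rm rk}(\du)$ directly. Your route is a bit more hands-on but avoids invoking Sylow conjugacy; the paper's route is slicker but uses a little more machinery. One small remark: when you rule out involutions in $\du$, the cleaner citation is Lemma~\ref{lemma:centraliser-dt-u} (or the Frobenius property from Lemma~\ref{lemma:norm-U}) rather than Lemma~\ref{lemma:db^circ_centerless}, since you only know $\bar{u}$ centralises $i$, not all of $\dt$; but the conclusion $C_{\dt}(\bar{u}) \neq 1 \Rightarrow \bar{u} = 0$ is exactly what those results provide.
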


\begin{proof} Clearly all Sylow $2$-subgroups of $\db$ are all isomorphic to $\mathbb{Z}_{2^{\infty}}$ (Lemma~\ref{lemma:centraliser-of-i}) and all involutions in $\db$ are conjugate. So $i^{G} \cap \db=i^{\db}.$ Towards a contradiction, assume that ${\rm rk}(i^G\setminus \db) < {\rm rk}(i^G)$, that is, ${\rm rk}(i^{\db})={\rm rk}(i^G \cap \db)= {\rm rk}(i^G)$. Since $C^\circ_{G}(i) < \db$ (Lemma~\ref{lemma:centraliser-of-i}), we get $${\rm rk}(G)={\rm rk}(i^G)+{\rm rk}(C_G(i))={\rm rk}(i^{\db})+{\rm rk}(C_{\db}(i))={\rm rk}(\db).$$ So $G=\db$; a contradiction.\end{proof}

We define the following set for an involution $k \in i^G \setminus \db$: \[T(k)= \{ \bar{b} \in \db : k \bar{b} k = \bar{b}^{-1}\}.\] 
\begin{lemma}\label{lemma:T(k)}
$T(k)$ is an abelian group which is $\db$-conjugate to a subgroup of $\dt$.
\end{lemma}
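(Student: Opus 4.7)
My plan is to prove the stronger statement that either $T(k)=\{1\}$ or $T(k)=\dt^{\bar{v}_0}$ for some $\bar{v}_0\in\du$; both conclusions of the lemma then follow at once, since $\dt^{\bar{v}_0}$ is abelian and, by construction, $\db$-conjugate to $\dt$. Trivially $1\in T(k)$ and $T(k)$ is closed under inversion, so the real task is to identify a single ambient torus.

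First I would show that $T(k)\cap\du=\{1\}$. If $\bar{u}\in T(k)\cap\du$ were non-trivial, then both $k$ and the involution $i\in\dt$ would invert $\bar{u}$ (that $i$ inverts $\du$ is isolated in the proof of Lemma~\ref{lemma:C_G(u)}); hence $ik$ would centralise $\bar{u}$, placing $\bar{u}\in\du\cap(ik)\du(ik)^{-1}$. Since $i\in\db$ but $k\notin\db$ we have $ik\notin\db$, so Lemma~\ref{finite_intersection} forces $\du\cap\du^{ik}\leqslant\db\cap\du^{ik}=1$, a contradiction.

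Next, for each $\bar{b}\in T(k)\setminus\{1\}$ I would pin down a specific torus around it. By the previous step, $\bar{b}\notin\du$, so the split Frobenius structure of $\db$ (Lemma~\ref{lemma:norm-U}) places $\bar{b}$ in a unique $\du$-conjugate $\dt^{\bar{v}_b}$ of $\dt$, with $C^\circ_{\mathbb{G}}(\bar{b})=\dt^{\bar{v}_b}$. Since $k$ inverts $\bar{b}$, it normalises $\dt^{\bar{v}_b}$; because $\dt^{\bar{v}_b}\leqslant\db$ but $k\notin\db$, the $\bar{v}_b$-conjugate of Lemma~\ref{lemma:centraliser-of-i} forces $k\in w^{\bar{v}_b}\cdot\dt^{\bar{v}_b}$, so $k$ inverts the whole of $\dt^{\bar{v}_b}$; in particular $\dt^{\bar{v}_b}\subseteq T(k)$.

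The crucial step is uniqueness of the torus. Given $\bar{b}_1,\bar{b}_2\in T(k)\setminus\{1\}$ with associated data $\bar{v}_1,\bar{v}_2$, the previous step writes $k=w^{\bar{v}_j}\bar{t}_j$ with $\bar{t}_j\in\dt^{\bar{v}_j}$ for $j=1,2$. Equating these two expressions for $k$ and simplifying, using $w^2=1$ together with the commutativity of $\du$, yields $w\bar{c}w\in\db$ for $\bar{c}:=\bar{v}_2\bar{v}_1^{-1}\in\du$; Lemma~\ref{lemma:intersecting dt} then gives $\bar{c}\in\db\cap\db^w=\dt$, forcing $\bar{c}\in\du\cap\dt=1$, i.e.\ $\dt^{\bar{v}_1}=\dt^{\bar{v}_2}$. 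Combining everything, $T(k)$ is either $\{1\}$ or equals a single $\dt^{\bar{v}_0}$, as required. The main obstacle will be precisely this uniqueness step; an alternative route is to show that two distinct $\du$-conjugates of $\dt$ together generate all of $\db$ and then contradict $N_{\mathbb{G}}(\db)=\db$ with $k\notin\db$, but the route through Lemma~\ref{lemma:intersecting dt} is more compact.
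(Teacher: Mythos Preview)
Your overall strategy is to prove the stronger dichotomy $T(k)\in\{\{1\},\dt^{\bar v_0}\}$, but the argument has a genuine gap in the second step. You assert that once $\bar b\in T(k)\setminus\{1\}$ is placed in $\dt^{\bar v_b}$ one has $C^\circ_{\mathbb G}(\bar b)=\dt^{\bar v_b}$, and from this you deduce that $k$ normalises $\dt^{\bar v_b}$. However, at this point in the paper only $C_{\db}(\bar t)=\dt$ is available for all $\bar t\in\dt\setminus\{1\}$ (this is what the split Frobenius structure in Lemma~\ref{lemma:norm-U} actually gives), while $C^\circ_{\mathbb G}(t)=\dt$ is established only for $t\in\bigcup_n(\mathbb T_n\cap P_n)$ in Corollary~\ref{corol:cons-of-P/S-finite}, not for arbitrary elements of $\dt$. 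Since $k$ does not normalise $\db$, knowing $C_{\db}(\bar b)=\dt^{\bar v_b}$ is of no help; and since the $\alpha$-orbit of a general $\bar b\in\dt$ need not be finite, the supertightness machinery cannot be applied to $C^\circ_{\mathbb G}(\bar b)$ either. Your inference ``$k$ inverts $\bar b\Rightarrow k\in N_{\mathbb G}(\dt^{\bar v_b})$'' is therefore unjustified, and without it the whole chain collapses: you have no independent argument that $T(k)$ is closed under products.

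The paper's route is shorter and sidesteps this obstacle entirely. Rather than locating each element in a torus, it shows directly that any two $t,s\in T(k)$ commute: the commutator $[t,s]$ lies in $\db'=\du$, and also $[t,s]=[(t^{-1})^k,(s^{-1})^k]=[t^{-1},s^{-1}]^k\in\du^k$; since $k\notin\db$, Lemma~\ref{finite_intersection} gives $\du\cap\du^k=1$, hence $[t,s]=1$. Commutativity (together with closure under inversion) makes $T(k)$ a subgroup, your correct observation $T(k)\cap\du=1$ then applies, and Fact~\ref{Frobenius_splits}(3) finishes. Note also that the paper does \emph{not} assert your stronger dichotomy at this stage; the possibility $0<{\rm rk}(T(k))<{\rm rk}(\dt)$ is dealt with only later, in Lemma~\ref{lemma:rank_X2}, by a rank computation rather than being excluded here.
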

\begin{proof}Consider the commutator $[t,s]$ for any $t,s\in T(k)$. We have $$\du =\db'\ni [t,s]=[(t^{-1})^k, (s^{-1})^k]=[t^{-1}, s^{-1}]^k\in \du^k.$$ But $\du \cap \du^k=1$ (Lemma~\ref{finite_intersection}). So $[t,s]=1$ and $T(k)$ is an abelian subgroup of $\db$ intersecting $\du$ trivially; the claim follows (Fact~\ref{Frobenius_splits}). \end{proof}

\begin{lemma}\label{lemma:rank_X2} ${\rm rk}(i^G) \leqslant {\rm rk}(\db)$.\end{lemma}

\begin{proof}We first define the following sets: \[X_1=\{k \in i^G \setminus \db : {\rm rk}(T(k)) < {\rm rk}(\dt) \},\] \[X_2=\{k \in i^G\setminus \db : {\rm rk}(T(k)) = {\rm rk}(\dt) \}.\] Now, for $\ell\in \{1,2\}$, let $\sim_\ell$ be an equivalence relations on $X_\ell$ so that for $k_1, k_2 \in X_\ell$, $k_1 \sim_\ell k_2$ if and only if $k_1$ and $k_2$ are in the same coset of $\db$.

Consider the definable projection
\[ p : X_1 \longmapsto X_1/ \sim_1 . \]Let $(X_1)_j = \{ k_1 \in X_1 : {\rm rk}(p^{-1}(p(k_1)))=j \} .$ Clearly $0\leqslant j \leqslant {\rm rk}(\dt)-1$ and $X_1$ can be written as a disjoint union of finitely many $(X_1)_j$'s; so for some $j_0$, ${\rm rk}(X_1)={\rm rk}((X_1)_{j_0})$. We get ${\rm rk}(X_1) \leqslant {\rm rk}(X_1/\sim_1) + j_0$.  Moreover, since we have  $${\rm rk}(\db)+ {\rm rk}(X_1/\sim_1) \leqslant {\rm rk}(G)={\rm rk}(i^G)+{\rm rk}(\dt),$$ we get ${\rm rk}(X_1/\sim_1)\leqslant{\rm rk}(i^G)-{\rm rk}(\du)$ and ${\rm rk}(X_1) \leqslant {\rm rk}(i^G)-{\rm rk}(\du)+j_0.$ But $j_0 < {\rm rk}(\dt) \leqslant {\rm rk}(\du)$ so ${\rm rk}(X_1) < {\rm rk}(i^G)$. Lemma~\ref{lemma:generic-inv} now gives us ${\rm rk}(X_2)={\rm rk}(i^G)$.

Let now $k_1 \in X_2$. Then, by above, $T(k_1)=\dt^{\bar{u}}$ for some unique element $\bar{u} \in \du$. Now consider the map \[\phi: X_2 /\sim_2 \, \,\longrightarrow \du\] sending an element $k_1/ \sim_2$ to the element $\bar{u}$. By Lemma~\ref{lemma:centraliser-of-i}, ${\rm rk}(X_2/\sim_2)= {\rm rk}(\du)$. As ${\rm rk}(X_2)\leqslant {\rm rk}(X_2/\sim_2)+ {\rm rk}(\dt)$ we get ${\rm rk}(i^G)={\rm rk}(X_2)\leqslant {\rm rk}(\db)$.\end{proof}

\begin{lemma}\label{lemma:axiom3c} $G = \db \sqcup \du w\db$.

\end{lemma}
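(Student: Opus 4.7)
The plan is to establish the decomposition in three stages: verify disjointness, compute the Morley rank and Morley degree of $\du w \db$, and eliminate elements outside $\db \cup \du w \db$ via a double-coset argument combined with a degree count.

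First, $\db \cap \du w \db = \emptyset$: an equality $\bar{b}_1 = \bar{u} w \bar{b}_2$ would give $w \in \db$, contrary to $w \notin \db$ (which follows from $w \notin N_{\mathbb{G}}(\db)$, as shown in the proof of Lemma~\ref{lemma:norm-U}). Next I would prove that the multiplication map $\phi : \du \times \db \to \du w \db$, $(\bar{u}, \bar{b}) \mapsto \bar{u} w \bar{b}$, is a bijection. From $\phi(\bar{u}_1, \bar{b}_1) = \phi(\bar{u}_2, \bar{b}_2)$ one extracts $\bar{u}_2^{-1} \bar{u}_1 \in \du \cap \db^w$, and a short calculation (using $w^{-1} = w$) reduces $\du \cap \db^w = 1$ to Lemma~\ref{finite_intersection} applied with $g = w \notin \db$. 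Injectivity of $\phi$ then gives ${\rm rk}(\du w \db) = {\rm rk}(\du) + {\rm rk}(\db)$, which by Corollary~\ref{rank-of-G} equals ${\rm rk}(\mathbb{G})$. Since $\du$ and $\db$ are connected (Lemma~\ref{lemma:du_connected}, Lemma~\ref{lemma:centraliser-of-i}) and hence of Morley degree $1$, the definable bijection $\phi$ also yields ${\rm deg}(\du w \db) = 1$.

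To conclude it suffices to show that $Y := \mathbb{G} \setminus (\db \cup \du w \db)$ is empty. The identity $\db w = \du w \dt$ (valid because $w$ normalises $\dt$) gives $\db w \db = \du w \db$, so both $\db$ and $\du w \db$ are single $(\db, \db)$-double cosets and $Y$ is a union of such cosets. Suppose for contradiction that $Y \neq \emptyset$ and pick $y \in Y$, so $\db y \db \subseteq Y$. A standard fibre analysis of the multiplication map $\db \times \db \to \db y \db$, $(b_1,b_2)\mapsto b_1 y b_2$, whose fibres are translates of $\db \cap \db^y$, yields ${\rm rk}(\db y \db) = 2{\rm rk}(\db) - {\rm rk}(\db \cap \db^y)$. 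Since $y \notin \db$, Lemma~\ref{finite_intersection} supplies $\db \cap \du^y = 1$, and the splitting $\db^y = \du^y \rtimes \dt^y$ then makes the projection $\db \cap \db^y \hookrightarrow \db^y / \du^y \cong \dt$ injective, so that ${\rm rk}(\db \cap \db^y) \leqslant {\rm rk}(\dt)$.

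Substituting gives ${\rm rk}(Y) \geqslant {\rm rk}(\db y \db) \geqslant 2{\rm rk}(\db) - {\rm rk}(\dt) = {\rm rk}(\db) + {\rm rk}(\du) = {\rm rk}(\mathbb{G})$, so $Y$ is generic in $\mathbb{G}$. In the partition $\mathbb{G} = \db \sqcup \du w \db \sqcup Y$ one has ${\rm rk}(\db) < {\rm rk}(\mathbb{G})$, while both $\du w \db$ and $Y$ have full rank, so Morley-degree additivity on the pieces of maximal rank forces ${\rm deg}(\mathbb{G}) \geqslant {\rm deg}(\du w \db) + {\rm deg}(Y) \geqslant 2$, contradicting ${\rm deg}(\mathbb{G}) = 1$. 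The main obstacle is exactly this final degree comparison: a rank count alone cannot preclude a non-empty generic subset $Y$, so exploiting the connectedness of $\mathbb{G}$, $\du$, and $\db$ simultaneously is essential.
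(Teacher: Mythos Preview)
Your proof is correct. The first two stages (disjointness and the bijection $\phi$ giving ${\rm rk}(\du w \db)={\rm rk}(\mathbb{G})$ with degree $1$) match the paper's argument exactly. The difference lies in the final step. The paper does not introduce $(\db,\db)$-double cosets or bound ${\rm rk}(\db\cap\db^y)$; instead it simply repeats the bijection argument for an arbitrary $g\in\mathbb{G}\setminus\db$: the map $(\bar u,\bar b)\mapsto \bar u g\bar b$ is injective by the \emph{same} calculation (Lemma~\ref{finite_intersection} gives $\du^{g}\cap\db=1$ for any $g\notin\db$, not just $g=w$), so $\du g\db$ is also generic, and connectedness of $\mathbb{G}$ forces $\du g\db\cap\du w\db\neq\emptyset$, which immediately places $g$ in $\du w\db$. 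Your route via $\db y\db$ and the injection $\db\cap\db^y\hookrightarrow\db^y/\du^y$ reaches the same degree contradiction but with more machinery; the paper's observation that the $\varphi_w$ argument works verbatim for $\varphi_g$ is shorter and avoids the extra fibre computation.
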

\begin{proof}
Let $g\in G\backslash \db$. Then the map $ \varphi_{g} : \du \times \db \longmapsto \du g \db$ defined by $(\bar{u},\bar{b})\mapsto \bar{u} g \bar{b}$ is a bijection by Lemma~\ref{finite_intersection}. So we have ${\rm rk}(\du g \db)= {\rm rk}(\db)+{\rm rk}(\du) = {\rm rk}(G)$ by Lemma~\ref{lemma:rank_X2}. Since $G$ is connected, the claim follows.
\end{proof}

At this point, it is routine to check that $G$ is a split Zassenhaus group:

\begin{propo}\label{theorem:G_Zassenhaus}
$G$ is a split Zassenhaus group, acting on the set of left cosets of $\db$ in $G$, with a one-point stabiliser $\db$ and a two-point stabiliser $\dt$.
\end{propo}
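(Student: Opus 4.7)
The plan is to verify directly the three ingredients in the definition of a split Zassenhaus group by reading off the structural facts already established for $\mathbb{G}$, $\db$, $\du$ and $\dt$. Consider the natural left multiplication action of $\mathbb{G}$ on the set $\Omega$ of left cosets of $\db$ in $\mathbb{G}$. The stabiliser of the coset $x\db \in \Omega$ is $x\db x^{-1}$, so the stabiliser of $\db \in \Omega$ is $\db$, and Lemma~\ref{lemma:intersecting dt} gives $\db \cap w\db w^{-1} = \dt$ since $w$ is an involution, identifying $\dt$ as the stabiliser of the pair $\{\db,\, w\db\}$.

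First I would establish $2$-transitivity. By the Bruhat decomposition $\mathbb{G} = \db \sqcup \du w \db$ (Lemma~\ref{lemma:axiom3c}), every coset in $\Omega \setminus \{\db\}$ has the form $\bar{u}w\db$ with $\bar{u}\in\du$, and for $\bar{u}_1,\bar{u}_2 \in \du$ the element $\bar{u}_2\bar{u}_1^{-1}\in \du \leqslant \db$ sends $\bar{u}_1w\db$ to $\bar{u}_2w\db$ while fixing $\db$. Hence $\db$ is transitive on $\Omega\setminus\{\db\}$ and therefore $\mathbb{G}$ is $2$-transitive on $\Omega$.

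Next I would prove that the stabiliser of any three distinct points is trivial. By $2$-transitivity it suffices to consider the cosets $\db$, $w\db$ and $\bar{u}w\db$ with $\bar{u}\in \du\setminus\{0\}$. An element fixing the first two lies in $\dt$ by the identification above, and it additionally fixes $\bar{u}w\db$ precisely when it also lies in $\bar{u}w\db w^{-1}\bar{u}^{-1}$; conjugating by $\bar{u}^{-1}$, this says that some $g \in \dt$ satisfies $\bar{u}^{-1}g\bar{u}\in \dt$. Writing $\bar{u}^{-1}g\bar{u} = [\bar{u}^{-1},g]\cdot g$ with $[\bar{u}^{-1},g]\in \du$ (since $\du \lhd \db$), and using $\du \cap \dt = 1$ (Corollary~\ref{corol:semi-direcrt}), the condition forces $[\bar{u}^{-1},g]=1$, i.e.\ $g\in C_{\dt}(\bar{u})$. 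But Lemma~\ref{lemma:centraliser-dt-u} gives $C_{\dt}(\bar{u})=1$, so $g=1$.

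Finally, Lemma~\ref{lemma:norm-U} shows that the one-point stabiliser $\db = \du \rtimes \dt$ is a split Frobenius group with Frobenius complement $\dt$ and Frobenius kernel $\du$; combined with $2$-transitivity and the triviality of three-point stabilisers, this is exactly the definition of a split Zassenhaus group with one-point stabiliser $\db$ and two-point stabiliser $\dt$. I do not anticipate any genuine obstacle here, since all the non-trivial structural inputs---the Bruhat decomposition, the intersection $\db\cap\db^w=\dt$, the Frobenius structure of $\db$, and the triviality of $C_\dt(\bar u)$ for $\bar u\in\du\setminus\{0\}$---have already been established; the only care required is to track the convention that the stabiliser of $x\db$ is $x\db x^{-1}$ and to exploit $w^2=1$ (which in fact only matters up to the identification $\db^w = w\db w^{-1}$).
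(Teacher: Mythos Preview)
Your proof is correct and follows essentially the same route as the paper: $2$-transitivity via the Bruhat decomposition, the two-point stabiliser via $\db\cap\db^{w}=\dt$, and triviality of the three-point stabiliser by analysing $g\in\dt$ with $g\,\bar u w\db=\bar u w\db$. The only real difference is in this last step: the paper observes $g^{\bar u}\in\dt^{\bar u}\cap(\db^{w}\cap\db)=\dt^{\bar u}\cap\dt=1$ using the Frobenius malnormality of $\dt$ in $\db$ (Lemma~\ref{lemma:norm-U}), whereas you rewrite $\bar u^{-1}g\bar u=[\bar u^{-1},g]\,g$ and use $\du\cap\dt=1$ to force $g\in C_{\dt}(\bar u)=1$ via Lemma~\ref{lemma:centraliser-dt-u}; both are valid and essentially equivalent here. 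One expository point to tighten: the sentence ``conjugating by $\bar u^{-1}$, this says that \dots\ $\bar u^{-1}g\bar u\in\dt$'' hides a step, since conjugating $g\in\dt$ by $\bar u^{-1}$ lands in $\dt^{\bar u}$, not $\dt$; what you actually need (and what the paper makes explicit) is that $\bar u^{-1}g\bar u$ lies in $\db$ trivially and in $\db^{w}$ by the stabiliser condition, hence in $\db\cap\db^{w}=\dt$ by Lemma~\ref{lemma:intersecting dt}.
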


\begin{proof}The action of $G$ by left multiplication on the coset space $G/\db$ is $2$-transitive by Lemma~\ref{lemma:norm-U} and Lemma~\ref{lemma:axiom3c} and the stabiliser of points $ \db$ and $ w \db$ is $\dt$ by Lemma~\ref{lemma:centraliser-of-i} and Lemma~\ref{lemma:norm-U}. Let $\bar{u} \in \du\setminus \{0\}$. Then the stabiliser of points $\db, w \db, \bar{u} w \db $ is trivial: If $g$ is from the stabiliser, then $g^{\bar{u} } \in \dt^{\bar{u} } \cap \dt = 1$.\end{proof}

We are ready to conclude the proof of Theorem~\ref{main-stringer}.
\begin{proof}[Proof of Theorem~\ref{main-stringer}] As $G$ is a split Zassenhaus group with a two-point stabiliser $\dt$ and $i\in \dt$, Theorem~\ref{th:DN} gives us $G\cong {\rm PGL}_2(K)$ for some algebraically closed field $K$ of characteristic $\neq 2$. \end{proof}

\section{Concluding remarks}\label{sec:concluding} We finish the paper with the following comments.

\subsubsection{Alternative approach}
We now comment on a different approach. If $m|n$, then the inclusion map ${\rm Soc}(P_m)\rightarrow {\rm Soc}(P_n)$ is
induced by a corresponding embedding of the fields
$F_m\rightarrow F_n$, which we may consider as an inclusion. One may expect:
\begin{enumerate}
  \item each extension $F_m\subseteq F_n$ is algebraic;
  \item the increasing union $\mathbb{F}:=\bigcup F_n$ coincides with the algebraic closure of $F$;
  \item the group $\bigcup {\rm Soc}(P_n)$ is an elementary
substructure of $G$.
\end{enumerate}
Unfortunately, it is unclear to us how to show that the aforementioned
union is an algebraically closed field. We do not even know whether
the corresponding field extensions $F_m\subseteq F_n$ are algebraic.
We do not also see any way of showing that
$\bigcup {\rm Soc}(P_n)$ is an elementary substructure of $G$.
However, it still looks like a promising line of research and we plan
to pick it up in future work.

\subsubsection{Connection to {\em ACFA}} Recall from the introduction that ${\rm ACFA}$ denotes the theory of algebraically closed fields with a generic automorphism (studied and axiomatised by Chatzidakis and Hrushovski in \cite{CH1999}). The model theory of ${\rm ACFA}$ is well-understood. In particular, it is known that every difference field embeds in a model of ${\rm ACFA}$ \cite[Theorem (1.1)]{CH1999} and that, given a model $(K_A, \sigma)$ of ${\rm ACFA}$, the fixed-points ${\rm Fix}_{K_A}(\sigma)$ form a pseudofinite field \cite[Proposition (1.2)]{CH1999}.

We proved that $G\cong{\rm PGL}_2(K)$, for an algebraically closed field $K$ of characteristic $\neq 2$ (Theorem~\ref{th:main}). This means that the pair $(K, \alpha)$ embeds in some model $(K_A, \sigma)$ of ${\rm ACFA}$. Moreover, we know that ${\rm Fix}_{K}(\alpha)=F$ is a pseudofinite field. So, it is natural to ask whether the pair $(K, \alpha)$ elementarily embeds in $(K_A, \sigma)$. We finish the paper with this question:

\begin{question}When is the pair $(K, \alpha)$ a model of ${\rm ACFA}$?
\end{question}

\section*{Acknowledgements} We wish to thank Alexandre Borovik for bringing us together to work on this programme, for his  encouragement and for his many helpful suggestions, especially related to finite groups.

The first author wants to thank Adrien Deloro and Katrin Tent for (separate) useful discussions.

The authors had a chance to work with their colleague Adrien Deloro at Oberwolfach Research Institute for Mathematics on a project which is a continuation of the current work. As a result of this meeting, the generalisation of the main result of this paper is stated in a way which will be directly used in the new work.

\bibliographystyle{plain}
\bibliography{Ulla.2021}

\begin{thebibliography}{10}

\bibitem{ABC}
Tuna Alt{\i}nel, Alexandre~V. Borovik, and Gregory Cherlin.
\newblock {\em Simple {G}roups of {F}inite {M}orley rank}.
\newblock American Mathematical Society Providence, Providence, RI, 2008.

\bibitem{Altinel-Burdges}
Tuna Altinel and Jeffrey Burdges.
\newblock On analogies between algebraic groups and groups of finite {M}orley
  rank.
\newblock {\em Journal of the London Mathematical Society}, 78(1):213--232,
  2008.

\bibitem{Ax1968}
James Ax.
\newblock The {E}lementary {T}heory of {F}inite {F}ields.
\newblock {\em Annals of Mathematics}, 88(2):239--271, 1968.

\bibitem{Borovik-Burdges-Cherlin}
Alexandre~V. Borovik, Jeffrey Burdges, and Gregory Cherlin.
\newblock Involutions in groups of finite {M}orley rank of degenerate type.
\newblock {\em Selecta Mathematica}, 13(1):1--22, 2007.

\bibitem{Borovik-Nesin}
Alexandre~V. Borovik and Ali Nesin.
\newblock {\em Groups of {F}inite {M}orley {R}ank}, volume~26 of {\em Oxford
  Logic Guides}.
\newblock Oxford University Press, New York, 1994.
\newblock Oxford Science Publications.

\bibitem{Brauers}
Richard Brauer and Michio Suzuki.
\newblock On finite groups of even order whose $2$-{S}ylow group is a
  quaternion group.
\newblock {\em Proceedings of the National Academy of Sciences of the United
  States of America}, 45:1757--1759, 1959.

\bibitem{Burdges-Cherlin2009}
Jeffrey Burdges and Gregory Cherlin.
\newblock {S}emisimple torsion in groups of finite {M}orley rank.
\newblock {\em Journal of {M}athematical {L}ogic}, 9:183--200, 2009.

\bibitem{Carter1971}
Roger~W. Carter.
\newblock {\em Simple {G}roups of {L}ie {T}ype}.
\newblock Wiley, New York, 1971.

\bibitem{CH1999}
Zo\'{e} Chatzidakis and Ehud Hrushovski.
\newblock Model theory of difference fields.
\newblock {\em Transactions of the American Mathematical Society},
  351:2997--3071, 1999.

\bibitem{Cherlin1979}
Gregory Cherlin.
\newblock Groups of small {M}orley rank.
\newblock {\em Annals of Mathematical Logic}, 17(1):1--28, 1979.

\bibitem{CJ}
Gregory Cherlin and {\'{E}}ric Jaligot.
\newblock Tame minimal simple groups of finite {M}orley rank.
\newblock {\em Journal of Algebra}, 276(1):13--79, 2004.

\bibitem{DelahanNesin}
Franz Delahan and Ali Nesin.
\newblock On {Z}assenhaus groups of finite {M}orley rank.
\newblock {\em Communications in Algebra}, 23:455--466, 1995.

\bibitem{Deloro2007}
Adrien Deloro.
\newblock Groupes simples connexes minimaux alg\'{e}briques de type impair.
\newblock {\em Journal of Algebra}, 2:877--923, 2007.

\bibitem{Deloro2009}
Adrien Deloro.
\newblock Steinberg's torsion theorem in the context of groups of finite
  {M}orley rank.
\newblock {\em Journal of {G}roup {T}heory}, 12:709--710, 2009.

\bibitem{DeloroJaligot2016}
Adrien Deloro and \'{E}ric Jaligot.
\newblock Involutive automorphisms of ${N}^\circ_\circ$-groups of finite
  {M}orley rank.
\newblock {\em Pacific {J}ournal of {M}athematics}, 285:111--184, 2016.

\bibitem{Deloro-Jaligot}
Adrien Deloro and \'{E}ric Jaligot.
\newblock Small groups of finite {M}orley rank with involutions.
\newblock {\em Journal f\"{u}r die reine und angewandte {M}athematik ({C}relles
  {J}ournal)}, 2010:23--45, 2010.

\bibitem{Deloro-Karhumaki-Ugurlu}
Adrien Deloro, Ulla Karhum\"{a}ki, and P{\i}nar~U\u{g}urlu Kowalski.
\newblock If the tight conjecture holds for small groups then it holds in
  general.
\newblock {\em In preparation}.

\bibitem{Ellers-Gordeev1998}
Erich~W. Ellers and Nikolai Gordeev.
\newblock On the conjectures of {J}. {T}hompson and {O}. {O}re.
\newblock {\em Transactions of the {A}merican {M}athematical {S}ociety},
  350(9):3657--3641.

\bibitem{Frecon2018}
Olivier Fr\'{e}con.
\newblock Simple groups of {M}orley rank 3 are algebraic.
\newblock {\em Journal of the American Mathematical Society}, 31(3):643--659,
  2018.

\bibitem{gorenstein1980}
Daniel Gorenstein.
\newblock {\em Finite {G}roups}.
\newblock Chelsea Publishing Co., New York, second edition, 1980.

\bibitem{GLS3}
Daniel Gorenstein, Richard Lyons, and Ronald Solomon.
\newblock {\em The {C}lassification of the {F}inite {S}imple {G}roups}, volume
  40(3) of {\em Mathematical {S}urveys and {M}onographs}.
\newblock American Mathematical Society, Providence, RI, 1998.

\bibitem{Gorenstein-Walter}
Daniel Gorenstein and John~H. Walter.
\newblock The characterization of finite groups with dihedral {S}ylow
  2-subgroups. i.
\newblock {\em Journal of Algebra}, 2(1):85 -- 151, 1965.

\bibitem{Hrushovski2002}
Ehud Hrushovski.
\newblock Pseudo-finite fields and related structures.
\newblock In {\em Model theory and applications}, volume~11 of {\em Quad.
  Mat.}, pages 151--212. Aracne, Rome, 2002.

\bibitem{Hrushovski2004}
Ehud Hrushovski.
\newblock The elementary theory of the {F}robenius automorphisms.
\newblock ArXiv:math/0406514, 2004.

\bibitem{Jaligot2000}
\'{E}ric Jaligot.
\newblock {FT}-groupes.
\newblock {\em Prepublications de l'institut Girard Desargues}, 2000.

\bibitem{karhumaki2019}
Ulla Karhum\"{a}ki.
\newblock Definably simple stable groups with finitary groups of automorphisms.
\newblock {\em Journal of Symbolic Logic}, 84(2):704--712, 2019.

\bibitem{Karhumaki2021}
Ulla Karhum\"{a}ki.
\newblock A note on pseudofinite groups of finite centraliser dimension.
\newblock {\em {J}ournal of {G}roup {T}heory}, 25(2):379--388, 2022.

\bibitem{Macintyre1997}
Angus Macintyre.
\newblock Generic automorphisms of fields.
\newblock {\em Annals of Pure and Applied Logic}, 88(2):165--180, 1997.

\bibitem{Macpherson2018}
Dugald Macpherson.
\newblock Model theory of finite and pseudofinite groups.
\newblock {\em Arch. Math. Logic}, 57:159--184, 2018.

\bibitem{Nesin1989}
Ali Nesin.
\newblock Solvable groups of finite {M}orley rank.
\newblock {\em Journal of {A}lgebra}, 121:26--39, 1989.

\bibitem{Passman1968}
Donald~S. Passman.
\newblock {\em {P}ermutation {G}roup}.
\newblock {M}athematics {L}ecture {N}ote {S}eries, {B}enjamin, {N}ew York,
  1968.

\bibitem{Steinberg1960}
Robert Steinberg.
\newblock Automorphisms of finite linear groups.
\newblock {\em Canadian Journal of Mathematics}, 12:606–615, 1960.

\bibitem{Ugurlu2009}
P{\i}nar U\u{g}urlu.
\newblock {\em Simple groups of finite {M}orley rank with a tight automorphism
  whose centralizer is pseudofinite}.
\newblock PhD thesis, The University of Manchester, 2009.

\bibitem{Ugurlu2013}
P{\i}nar U\u{g}urlu.
\newblock Pseudofinite groups as fixed points in simple groups of finite
  {M}orley rank.
\newblock {\em Journal of Pure and Applied Algebra}, 217(5):892--900, 2013.

\bibitem{Wilson1995}
John~S. Wilson.
\newblock On simple pseudofinite groups.
\newblock {\em Journal of the London Mathematical Society}, 51(2):471--490,
  1995.

\bibitem{Zilber1977}
Boris~I. Zilber.
\newblock Groups and rings whose theory is categorical.
\newblock {\em Fundamenta Mathematicae}, 95(3):173--188, 1977.

\end{thebibliography}

\end{document}